\newcommand{\N}{\mathbb{N}}
\newcommand{\Z}{\mathbb{Z}}
\newcommand{\R}{\mathbb{R}}
\newcommand{\VT}{\mathcal V}
\newcommand{\Wave}{\mathcal W}
\newcommand{\Ao}{\mathbf{A}}
\newcommand{\wave}{\mathbf{U} }
\newcommand{\Mc}{\mathcal{M}}
\newcommand{\Mo}{\mathbf{M}}
\newcommand{\Do}{\mathbf{D}}
\newcommand{\Ko}{\mathbf{K}}
\newcommand{\So}{\mathbf{S}}
\newcommand{\Po}{\mathbf{P}}
\newcommand{\wscale}{\phi}
\newcommand{\wwave}{\psi}
\newcommand{\V}{\mathrm V}
\newcommand{\W}{\mathrm W}
\newcommand{\pr}{\mathcal P}
\newcommand{\sph}{\mathbb S}
\newcommand{\wn}{Z}
\newcommand{\la}{\lambda}
\newcommand{\soft}{\mathbf s}
\newcommand{\Soft}{\mathbf S}
\newcommand{\data}{g}
\newcommand{\noise}{z}
\newcommand{\threshp}{w}
\newcommand{\edot}{\,\cdot\,}
\newcommand{\ew}{\mathbb E}
\newcommand{\signal}{f}
\newcommand\B{\mathcal{B}}
\newcommand{\eps}{\epsilon}
\newcommand{\La}{\Lambda}
\newcommand{\risk}{\boldsymbol{\Delta}}
\newcommand{\tik}{\boldsymbol{\Phi}}
\newcommand{\est}{\mathbf{R}}
\newcommand{\rf}{\mathcal{J}}
\newcommand{\half}{\mathbb{H}}
\newcommand{\zz}{z}
\newcommand{\xx}{x}
\newcommand{\yy}{y}
\newcommand{\fn}{{\tt f}}
\newcommand{\gn}{{\tt g}}
\newcommand{\cn}{{\tt c}}
\newcommand{\vn}{{\tt v}}
\DeclareMathOperator{\An}{\tt A}
\DeclareMathOperator{\Bn}{\tt B}
\DeclareMathOperator{\Vn}{\tt V}
\DeclareMathOperator{\Wn}{\tt W}
\newcommand{\kl}[1]{\left(#1\right)}       
\newcommand{\inner}[1]{\left\langle#1\right\rangle}      
\newcommand{\norm}[1]{\left\lVert#1\right\rVert}      
\newcommand{\abs}[1]{\left\lvert#1\right\rvert}                 
\newcommand\set[1]{\left\{#1\right\}}
\newcommand{\sinner}[1]{\langle#1\rangle}      
\newcommand{\snorm}[1]{\lVert#1\rVert}      
\newcommand{\sabs}[1]{\lvert#1\rvert}                 
\newcommand{\sparen}[1]{\left\{#1\right\}}		      
\newcommand{\ran}[1]{\mathrm{ran}\left(#1\right)}     
\newcommand{\dom}[1]{\mathrm{dom}\left(#1\right)}     
\renewcommand{\d}{\,\mathrm{d}}						  
\DeclareMathOperator*{\argmin}{\mathrm{arg\:min}}
\renewcommand{\epsilon}{\varepsilon}
\renewcommand{\rho}{\varrho}
\renewcommand{\phi}{\varphi}
\newtheorem{theorem}{Theorem}
\newtheorem{lemma}[theorem]{Lemma}
\newtheorem{definition}[theorem]{Definition}
\title{Efficient regularization with wavelet sparsity constraints in PAT}
\author{J\"urgen Frikel} 
\affil{Department of Computer Science and Mathematics\\
Universit\"atsstra{\ss}e 31, D-93053 Regensburg, Germany\\
E-mail: {\tt ‎juergen.frikel@oth-regensburg.de}} 
\author{Markus Haltmeier} 
\affil{Department of Mathematics, University of Innsbruck\\
Technikestra{\ss}e 13, A-6020 Innsbruck, Austria\\
E-mail: {\tt markus.haltmeier@uibk.ac.at}}
\date{March 23, 2017}
\begin{document}
\maketitle

\begin{abstract}
In this paper we consider the reconstruction problem of photoacoustic tomography (PAT) with a flat observation surface. We develop a direct reconstruction method that employs regularization with wavelet sparsity constraints. To that end, we derive a wavelet-vaguelette decomposition (WVD) for the PAT  forward operator and a corresponding explicit
reconstruction formula in the case of exact data.  In the case of noisy data, we  combine the WVD reconstruction formula with soft-thresholding which  yields a spatially adaptive estimation method. We demonstrate that our method is statistically optimal for white random noise if the unknown function is assumed to lie in any Besov-ball. We present generalizations of this approach and, in particular, we discuss the combination of vaguelette soft-thresholding with a TV prior. We also provide an efficient implementation of the vaguelette transform that leads to fast image reconstruction algorithms supported by numerical results.

\medskip
\noindent
\textbf{Key words:}
	Photoacoustic tomography, image reconstruction, wavelet-vaguelette decomposition, variational regularization,  sparsity constraints, wavelet-TV regularization.

\medskip
\noindent
\textbf{AMS subject classification:}
  49N45, 65N21, 92C55.

\end{abstract}

\section{Introduction}
\label{sec:intro}

Photoacoustic tomography (PAT) is a novel coupled-physics  (hybrid) mo\-dality for non-invasive biomedical imaging that combines the  high contrast of optical tomography with the high spatial resolution of acoustic imaging \cite{beard2011biomedical,kruger1995photoacoustic,wang2009multiscale,wang2011photoacoustic,xu2006photoacoustic}. Its principle consists in illuminating a sample by an electromagnetic pulse that, due to the photoacoustic effect, generates pressure waves inside of the object; see Figure~\ref{fig:pat}. The generated pressure waves (the acoustic signals) then propagate through the sample and beyond, and the pressure is recorded  outside of the sample.  Finally, mathematical
algorithms are used to reconstruct  an image of the interior (see, for example, \cite{kuchment2011mathematics,paltauf2009photoacoustic,xu2006photoacoustic}).

In this paper we work with the standard model of PAT, where the acoustic pressure $u \colon \R^d \times \kl{0, \infty} \to \R$ solves the  standard wave equation (see \eqref{eq:wave1}). The goal of PAT is to recover the initial pressure distribution (at time $t=0$), given by a function $h:\R^d\to\R$, from measurements $\wave h \coloneqq u|_{ \partial \half_+ \times (0, \infty) }$ of the pressure that is recorded on the hyperplane $\partial \half_+$. Here, $\half_+$ denotes the half space $\R^{d-1} \times (0, \infty)$. That is, we assume that we know $u$ on $\partial \half_+ = \R^{d-1}\times\{0\}$ (the acquisition surface), and our goal is to reconstruct $h$ from  (possibly approximate) knowledge of $\wave h $. This is an inverse problem,  which amounts to an (approximate) inversion of the operator $\wave$. We will refer to that problem as the inverse problem of PAT with a planar acquisition geometry. Note that similar approaches can be considered for other acquisition geometries as well.

\begin{figure}[tbh!]
\centering
  \includegraphics[width=\textwidth]{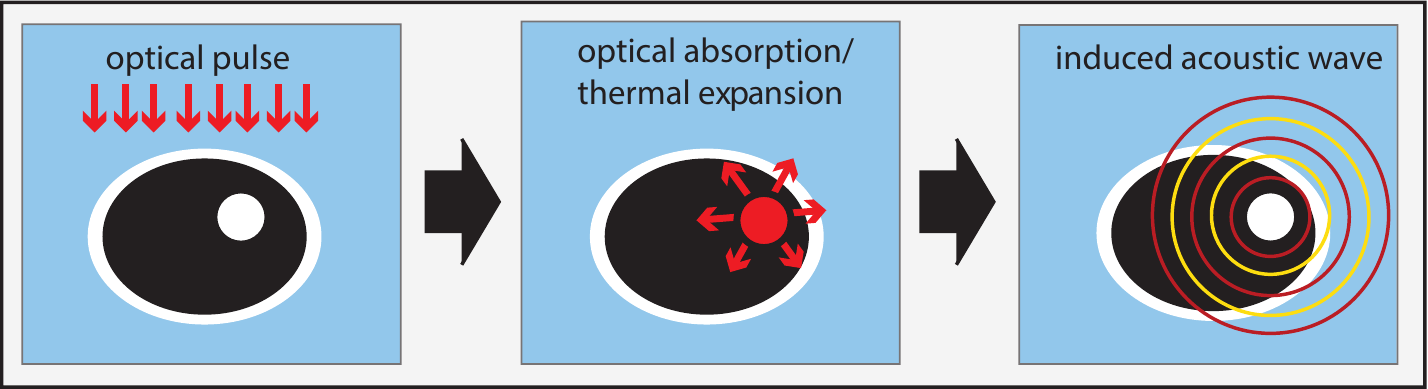}
\caption{\textsc{Basic principle of PAT.}\label{fig:pat}  A semitransparent sample is illuminated with a  short optical pulse. Due to optical absorption and subsequent thermal expansion an acoustic pressure wave is induced within  the sample. The pressure waves are measured outside of the sample and used to reconstruct an image of the interior.}
\end{figure}

In the case of exact data $\wave h$,  several approaches have been derived for solving the  inverse problem of PAT considering different acquisition geometries. This includes time reversal (see  \cite{burgholzer2007exact,Hristova2008,FinPatRak04,nguyen2016dissipative,Treeby10}), Fourier domain  algorithms (see \cite{HalSchZan09b,AgrKuc07,Kun07b,xu2002exact1,KoeFraNiePalWebFre01,JaeSchuGerKitFre07,BK78,NorLin81,Faw85,And88}), analytic reconstruction formulas  of back-projection
type (see \cite{palamodov2012uniform,FinHalRak07,FinPatRak04,haltmeier13inversion,haltmeier14universal,HalPer15b,Kun07a,natterer2012photo,xu2005universal}), as well as iterative
approaches \cite{DeaBueNtzRaz12,haltmeier2016iterative,PalNusHalBur07b,PalViaPraJac02,RosNtzRaz13,zhang2009effects,wang2014discrete,
wang2012investigation}. For the case of noisy data, it is well known that iterative methods (including variational methods and sparse reconstructions) tend to be more accurate than analytic methods.

Among those iterative techniques, sparse regularization approaches have gained a lot of attention during the last years as they have proven to perform well for noisy data as well as for incomplete data problems. One of their main advantages consists in their ability to combine efficient regularization with good feature preservation and to (to some extent) to compensate for the missing data \cite{betcke2016acoustic,frikel2013sparse,provost2009application,haltmeier2016compressed}. However, these advantages of sparse regularization methods come with the cost of typically significantly longer reconstruction times than FBP-type approaches. This is because the forward and adjoint operators have to be evaluated repeatedly. Due to that reason, FBP
type methods (or other direct approaches)  are often preferred over the more elaborate sparse regularization techniques \cite{agranovsky2009reconstruction,arridge2016adjoint,burgholzer2007exact,pan2009why,poudel2017mitigation}.

In this paper we develop numerically efficient reconstruction method for PAT with planar geometry that effectively deals with noisy data $g=\wave h+\noise$, where regularization is achieved by enforcing sparsity constraints in the reconstruction with respect to wavelet coefficients of $h$. More precisely, we derive a direct method for calculating a minimizer the $\ell^1$-Tikhonov
functional
\begin{equation} \label{eq:tik1}
	\tik_{\data,\threshp}(h)
	\coloneqq
	\frac{1}{2}
	\norm{\wave  h - \data}^2
	+
	 \sum_{\la \in \La}
	 \threshp_{\la} \abs{\inner{\wwave_\la, h}},
\end{equation}
where $(\psi_\la)_{\la \in \La}$ denotes an orthonormal wavelet basis and $w_\la >0$ are weights. To achieve that, as one of our main results, we construct a wavelet-vaguelette decomposition for the operator $\wave$. That is, given a wavelet basis $(\psi_\la)_{\la \in \La}$, we construct a system $(v_\lambda)_{\la \in \La} $ in $\ran{\wave}$ that, in the case of exact data $g=\wave h$, gives rise to an inversion formula of the form
$h = \sum_{\lambda} \kappa_\lambda \langle \wave h, v_\lambda\rangle \psi_\lambda$.
Given noisy data $g=\wave h+\noise$, we show that a combination of this formula with soft-thresholding $\soft_\threshp$, namely
\begin{equation}
\label{eq:wvd rec}
	h^* = \sum_{\lambda} \kappa_\lambda \, \soft_\threshp( \langle g, v_\lambda\rangle) \psi_\lambda,
\end{equation}
provides a minimizer of the functional \eqref{eq:tik1}. Additionally, we derive an efficient algorithm for the vaguelette transform $g\mapsto (\langle g, v_\lambda\rangle)_\lambda$ and provide an implementation for the WVD reconstruction \eqref{eq:wvd rec}.  Moreover, we show order optimality of our method in the case of  deterministic noise as  well as is the case of Gaussian random noise.  We also consider generalizations of our method and, in particular, we show how WVD reconstruction can be combined with an additional TV-prior.

Sparse regularization has been widely used as a reconstruction method for general inverse problems and there is a vast literature on that topic
(see, for example, \cite{burger2013convergence,daubechies2004iterative,flemming2010new,grasmair2008sparse,grasmair2011necessary,haltmeier2013stable,
lorenz2008convergence,ramlau2006tikhonov,scherzer2009variational,vaiter2013robust}).
In the statistical setting, $\ell^1$-Tikhonov regularization is known as LASSO  \cite{Tib96,bickel2009simultaneous,Zou06,osborne2000lasso} or
basis pursuit \cite{CheDonSau01}.   In most cases, the reconstructions are computed by employing iterative algorithms (such as iterative soft-thresholding) to minimize the $\ell^1$-Tikhonov functional \cite{bredies2008,CombPes11,ComWaj05,daubechies2004iterative,figueiredo2003algorithm,teboulle2009fast}. As mentioned above, those methods have the disadvantage to be slower than FBP type methods, as the forward and adjoint problem  have to be  solved repeatedly.

Wavelet-vaguelette decompositions and generalizations like biorthogonal curvelet
decompositions and shearlet decompositions have been derived for the classical Radon transform in 2D, see \cite{AbrSil98,Donoho:1995gp,CanDon02,colonna2010radon,kolaczyk1996wavelet}.  To the best of our knowledge, this paper is the first to provide a WVD for photoacoustic tomography as well
as an efficient direct implementation of sparse regularization using wavelets for that case.

\paragraph{Organization of the paper}
In section~\ref{sec:pat} we  review the mathematical principles of PAT with a flat observation surface and collect results required for
our further analysis. In Section~\ref{sec:wvd} we derive the WVD
for the forward operator. The WVD is then used to define the soft-thresholding 
estimator in Section~\ref{sec:noisy}. In that section we also
discuss the equivalence to variational estimation such as $\ell^1$-Tikhonov
regularization. The efficient implementation of the estimator requires and
efficient implementation of the vaguelette transform. Such an algorithm is
derived  in Section~\ref{sec:num}, where   we also present results  of our numerical simulation

\section{PAT with a flat observation surface}
\label{sec:pat}

Let $C_{0}^{\infty}(\half_+)$ denote  the space of compactly supported functions $f \colon \R^d  \to \R$
that are supported in the half space  $\half_+ \coloneqq\R^{d-1}\times(0,\infty)$, where $d\geq 2$. We write $(\xx,\yy)\in\R^{d-1}\times\R=\R^{d}$ and  consider the initial value problem
\begin{equation} \label{eq:wave1}
\begin{array}{rll}
	(\partial_{t}^{2} - \Delta_{(\xx,\yy)}) u(\xx,\yy,t) &= 0, \quad &(\xx,\yy,t)\in\R^{d}\times(0,\infty),\\
	u(\xx,\yy,0) &= h(\xx,\yy), &(\xx,\yy) \in\R^{d},\\
	\partial_{t}u(\xx,\yy,0) &= 0, & (\xx,\yy)\in\R^{d},
\end{array}
\end{equation}
with $f\in C_{0}^{\infty}(\half_+)$. 
We assume that the pressure data is observed on the hyperplane $\partial \half_+ = \R^{d-1}\times\{0\}$ and define the corresponding PAT forward operator as
\begin{align*}
	&\wave \colon C_{0}^{\infty}(\half_+)\to C^{\infty}(\R^{d})\\
	&(\wave h)(\xx,t)
	\coloneqq
	\begin{cases}
		u(\xx,0,t), & \text{for $t>0$},\\
		0,           &\text{else},
	\end{cases}
\end{align*}
where $u$ denotes the unique solution of~\eqref{eq:wave1}.
Our aim is to  recover $f$ from exact or approximate knowledge of
$\wave h$.
We are particularly interested in the cases $d=2$ and $d=3$, as they are of
practical relevance in  PAT (see \cite{kuchment2011mathematics,BurBauGruHalPal07}).
Nevertheless, in what follows, we consider the case of general dimension since this does not introduce
additional difficulties.

\subsection{Isometry property}

 The following isometry property for the wave equation is central in the analysis
 we derive below. For odd dimensions, it has been obtained in~\cite{BK78}, and for even dimensions in~\cite{Narayanan:2010cy}; see also \cite{Bel09}.

\begin{lemma}[Isometry property for the operator $\wave$]
\label{lem:isometry}
	For any $f\in C_{0}^{\infty}(\half_+)$ we have
	\begin{equation}
		\int_{\R^{d-1}}\int_{0}^{\infty}\frac{\abs{\wave h(\xx,t)}^{2}}{{t}}\d t\d \xx
		=
		\int_{\R^{d-1}}\int_{0}^{\infty}\frac{\abs{h(\xx,\yy)}^{2}}{{\yy}}\d \yy\d \xx .
	\end{equation}
\end{lemma}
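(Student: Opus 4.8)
The plan is to diagonalise $\wave$ by the Fourier transform in the tangential variable $\xx\in\R^{d-1}$: this turns \eqref{eq:wave1} into a one‑parameter family of one‑dimensional problems, for each of which the weighted $L^2$‑identity can be checked directly, and these are then reassembled by Plancherel's theorem.

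\textbf{Reduction.} Write $\widehat{g}$ for the Fourier transform of $g$ in the $\xx$‑variable, and for fixed $\xxi\in\R^{d-1}$ set $b(\yy)\coloneqq\widehat h(\xxi,\yy)$. Since $h\in C_0^\infty(\half_+)$, each $b$ is smooth, compactly supported, and supported in $\{\yy>0\}$; in particular $b(0)=0$. Applying the $\xx$‑transform to \eqref{eq:wave1}, the function $\widehat u(\xxi,\,\cdot\,,\,\cdot\,)$ solves the one–dimensional Klein--Gordon equation $(\partial_t^2-\partial_\yy^2+\abs{\xxi}^2)\widehat u=0$ with Cauchy data $(b,0)$, and $\widehat{\wave h}(\xxi,t)=\widehat u(\xxi,0,t)$ is its trace on the line $\yy=0$; solving in the $\yy$‑Fourier variable $\eeta$ gives
\[
  \widehat{\wave h}(\xxi,t)=\frac1{2\pi}\int_{\R}\widehat b(\eeta)\,\cos\!\bigl(t\sqrt{\abs{\xxi}^2+\eeta^2}\bigr)\,\d\eeta .
\]
By Plancherel in $\xx$ (the weights $1/t$ and $1/\yy$ not involving $\xx$), Lemma~\ref{lem:isometry} follows once we show, for each $\xxi$, the one‑dimensional identity $\int_0^\infty t^{-1}\abs{\widehat{\wave h}(\xxi,t)}^2\,\d t=\int_0^\infty \yy^{-1}\abs{b(\yy)}^2\,\d\yy$.

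\textbf{The one‑dimensional identity.} Since $b(0)=0$ we have $\int_\R\widehat b(\eeta)\,\d\eeta=0$, hence $\widehat{\wave h}(\xxi,0)=0$, and under the integral above $\cos(t\,\cdot\,)$ may be replaced by $\cos(t\,\cdot\,)-1$; this is what makes $t^{-1}\abs{\widehat{\wave h}(\xxi,t)}^2$ integrable at $t=0$. Expanding $\abs{\widehat{\wave h}}^2$ as a double integral in $(\eeta,\eeta')$, inserting a regulariser $e^{-\delta t}$ to legitimise interchanging the orders of integration, evaluating the $t$‑integral by the Frullani/Dirichlet formula for $\int_0^\infty t^{-1}(\cos at-1)(\cos bt-1)e^{-\delta t}\,\d t$, and letting $\delta\to0$, the left‑hand side becomes a double integral against the kernel $-\tfrac12\log\bigl\lvert(\abs{\xxi}^2+\eeta^2)-(\abs{\xxi}^2+\eeta'^2)\bigr\rvert=-\tfrac12\log\abs{\eeta-\eeta'}-\tfrac12\log\abs{\eeta+\eeta'}$; every other term is of the separated form $p(\eeta)+p(\eeta')$ and is annihilated by $\int_\R\widehat b=0$, so in particular the $\abs{\xxi}$‑dependence drops out. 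The $-\tfrac12\log\abs{\eeta-\eeta'}$ piece is (up to the same normalisation) the Fourier representation of $\int_0^\infty \yy^{-1}\abs{b(\yy)}^2\,\d\yy$, whereas the $-\tfrac12\log\abs{\eeta+\eeta'}$ piece is a pairing of $b$ against its reflection $b(-\,\cdot\,)$ and therefore vanishes because $b$ is supported in $\{\yy>0\}$. This cancellation is exactly where the half‑space hypothesis $h\in C_0^\infty(\half_+)$ enters.

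\textbf{Main obstacle and alternatives.} The only genuinely delicate point is the $t$‑integration: performed termwise it yields contributions that individually diverge as $t\to\infty$, the divergences cancelling only after summation because their total coefficient is $\widehat{\wave h}(\xxi,0)=0$; this forces a careful regularisation together with a dominated‑convergence argument in $\delta$ (the limiting kernel has only an integrable logarithmic singularity on the diagonal $\eeta=\eeta'$). A cleaner, essentially computation‑free route is to observe that the substitution $\yy=e^{s}$ identifies $L^2\bigl((0,\infty),\d\yy/\yy\bigr)$ with $L^2(\R)$, so that the Mellin transform is an isometry of the spaces on both sides of the lemma, and then to verify that the fiberwise trace map $b\mapsto\widehat{\wave h}(\xxi,0,\,\cdot\,)$ is a Mellin multiplier of modulus one---again with the ``$\log\abs{\eeta+\eeta'}$'' term absent because of the support condition. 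In any case the one‑dimensional identity is classical: it is contained in \cite{BK78} for odd $d$ and in \cite{Narayanan:2010cy} for even $d$; in odd dimensions it can also be derived without any Fourier analysis, by using Huygens' principle to write $\wave h(\xx,t)$ as a hemispherical mean of $h$ over $\{\abs v<t\}\subset\R^{d-1}$ and changing variables $\yy=\sqrt{t^2-\abs v^2}$, the even‑dimensional case then following by descent.
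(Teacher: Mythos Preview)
The paper does not prove this lemma at all; its ``proof'' is the single sentence citing \cite{BK78} for odd $d$ and \cite{Narayanan:2010cy} for even $d$. Your sketch therefore goes well beyond what the paper offers, and the strategy you outline---Fourier transform in the tangential variable $\xx$ to reduce to a one-parameter family of one-dimensional Klein--Gordon trace problems, then a Frullani-type evaluation of the $t$-integral yielding a logarithmic kernel on the $\eeta$-side---is indeed the route taken in the cited references.

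There is, however, a genuine gap: the one-dimensional identity you write down is \emph{false with constant~$1$}, and your own computation, had you tracked the constants, would reveal this. Test it at $\xxi=0$: the fibrewise problem is then the free one-dimensional wave equation, and d'Alembert's formula gives $\widehat{\wave h}(0,t)=\tfrac12\bigl[b(t)+b(-t)\bigr]=\tfrac12\,b(t)$ for $t>0$ (using $\supp b\subset(0,\infty)$), so
\[
\int_0^\infty t^{-1}\,\bigl\lvert\widehat{\wave h}(0,t)\bigr\rvert^2\,\d t \;=\;\tfrac14\int_0^\infty \yy^{-1}\,\abs{b(\yy)}^2\,\d\yy,
\]
not equality. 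Your Frullani calculation is consistent with this: it produces the effective kernel $-\tfrac12\log\abs{\eeta-\eeta'}$ on the left, whereas expressing $\int_0^\infty\abs{b(\yy)}^2/\yy\,\d\yy$ through the distributional Fourier transform of $\abs{\yy}^{-1}$ gives the kernel $-2\log\abs{\eeta-\eeta'}$ on the right---again a factor of $4$. Since you (correctly) show that the effective kernel is $\xxi$-independent, the discrepancy cannot be repaired by the $\xxi$-integration. The upshot is that either the lemma as printed in the paper carries an incorrect normalising constant (this would be harmless downstream: $\Ao$ is then a fixed scalar multiple of an isometry and everything rescales), or there is a convention mismatch with the cited sources; but the per-$\xxi$ identity exactly as you state it does not hold. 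A minor side point: your closing remark about odd dimensions via ``hemispherical means over $\{\abs{v}<t\}$'' and descent is garbled---Huygens' principle in odd dimensions gives a \emph{spherical} (not ball) mean, and the method of descent runs from odd to even, not the reverse---though this does not affect your main argument.
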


\begin{proof}
See \cite{BK78} for $d$ odd
and~\cite{Narayanan:2010cy}  for $d$ even.
\end{proof}

From Lemma~\ref{lem:isometry} it follows that  $\wave$ extends to an isometry on the space
$\mathrm{cl}(C^{\infty}_{0}(\half_+))$ with respect to the scalar product
$\langle h,g\rangle_{0} = \int_{\R^{d-1}}\int_{0}^{\infty} \frac{h(x)\bar g(x)}{\yy}\d \yy\d \xx$.
In view of the isometry property and the desired wavelet-vaguelette decomposition,
instead of the operator $\wave$, it is more convenient to work with the modified operator
\begin{equation}\label{eq:waveP}
\begin{aligned}
	&\Ao \colon C_{0}^{\infty}(\half_+)\to C^{\infty}(\R^{d})\\
	&(\Ao f)(\xx,t) \coloneqq
	\begin{cases}
		(t^{-1/2} \circ \wave  \circ \yy^{1/2})( f)(\xx,t)   & \text{for  $t>0$},\\
		0, &\text{else}.
	\end{cases}
\end{aligned}
\end{equation}
It is not hard to see that $\Ao$ is an isometry with respect to the inner product
\begin{equation*}
\langle h,g\rangle_{L^2(\half_+)}
	=
	\int_{\R^{d-1}}
	\int_{0}^{\infty} h(x,y) \overline{g(x,y)}\d t\d \xx .
\end{equation*}
We have the following result.
\begin{lemma}[Isometry property for the operator  $\Ao$]
 \label{lem:Aiso}
Let $\Ao$ be defined as in~\eqref{eq:waveP}.
Then, the following assertions hold:
\begin{enumerate}
\item \label{lem:Aiso1}
For all $f \in C_{0}^{\infty}(\half_+)$ we have $\norm{\Ao f}_{L^2(\half_+)}
=  \norm{f}_{L^2(\half_+)}$.

\item \label{lem:Aiso2}
The operator $\Ao$ uniquely extends to an isometry $\Ao \colon L^2(\half_+)
\to L^2(\half_+)$.
\end{enumerate}
\end{lemma}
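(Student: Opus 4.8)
For part~\ref{lem:Aiso1} the plan is to reduce the assertion directly to the isometry property of Lemma~\ref{lem:isometry}. The one small point to settle first is that $\yy^{1/2}f \in C_{0}^{\infty}(\half_+)$ whenever $f \in C_{0}^{\infty}(\half_+)$: although $\yy\mapsto\yy^{1/2}$ fails to be smooth at $\yy=0$, it is $C^{\infty}$ on the \emph{open} half line $(0,\infty)$, and $f$ is compactly supported inside $\half_+$, so the product remains smooth and compactly supported in $\half_+$. Lemma~\ref{lem:isometry} may therefore be applied to $h=\yy^{1/2}f$. Substituting the definition~\eqref{eq:waveP} and unfolding the $L^2(\half_+)$ norm gives
\begin{align*}
	\norm{\Ao f}_{L^2(\half_+)}^2
	&= \int_{\R^{d-1}} \int_{0}^{\infty} \frac{\abs{(\wave(\yy^{1/2}f))(\xx,t)}^{2}}{t} \d t \d \xx \\
	&= \int_{\R^{d-1}} \int_{0}^{\infty} \frac{\abs{(\yy^{1/2}f)(\xx,\yy)}^{2}}{\yy} \d \yy \d \xx
	 = \int_{\R^{d-1}} \int_{0}^{\infty} \abs{f(\xx,\yy)}^{2} \d \yy \d \xx
	 = \norm{f}_{L^2(\half_+)}^{2},
\end{align*}
where the middle equality is Lemma~\ref{lem:isometry} and the factor $\yy$ coming from $\abs{\yy^{1/2}f}^{2}=\yy\,\abs{f}^{2}$ exactly cancels the weight $1/\yy$. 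This computation is short; in particular it also shows $\Ao f \in L^2(\half_+)$, so the norm on the left is well defined.

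For part~\ref{lem:Aiso2} I would invoke the standard density (bounded-linear-transformation) argument. Since $C_{0}^{\infty}(\half_+)$ is dense in $L^2(\half_+)$ and, by part~\ref{lem:Aiso1}, $\Ao$ restricted to this subspace is linear and norm-preserving — hence Lipschitz continuous — every $f\in L^2(\half_+)$ admits an approximating sequence $(f_n)\subset C_{0}^{\infty}(\half_+)$ with $f_n\to f$. Its images satisfy $\norm{\Ao f_n-\Ao f_m}_{L^2(\half_+)}=\norm{f_n-f_m}_{L^2(\half_+)}$, so $(\Ao f_n)$ is Cauchy, and completeness of $L^2(\half_+)$ lets one set $\Ao f\coloneqq\lim_n\Ao f_n$. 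One then checks in the usual way that this limit is independent of the chosen sequence, that the resulting map is linear, and that $\norm{\Ao f}_{L^2(\half_+)}=\lim_n\norm{f_n}_{L^2(\half_+)}=\norm{f}_{L^2(\half_+)}$, so the extension is again an isometry; uniqueness is forced by density together with continuity of any isometric extension.

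The only place where a little care is genuinely needed is the verification that $\yy^{1/2}f$ lies in $C_{0}^{\infty}(\half_+)$, which is what legitimizes the use of Lemma~\ref{lem:isometry} and the formal manipulation of the weighted integrals; everything else is routine functional analysis. Note also that the statement only asserts that the extension is an isometry, not that it is onto $L^2(\half_+)$, so the argument need not address surjectivity of $\Ao$.
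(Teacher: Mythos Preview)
Your proof is correct and follows essentially the same approach as the paper: part~\ref{lem:Aiso1} is reduced to Lemma~\ref{lem:isometry} via the substitution $h=\yy^{1/2}f$ and the cancellation of the weight, and part~\ref{lem:Aiso2} is the standard density/bounded-linear-transformation extension. Your explicit check that $\yy^{1/2}f\in C_0^\infty(\half_+)$ is a useful detail the paper leaves implicit, but otherwise the arguments coincide.
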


\begin{proof} \mbox{}
\ref{lem:Aiso1}
According to Lemma~\ref{lem:isometry} for every $f \in C_{0}^{\infty}(\half_+)$ we have
\begin{align*}
	\norm{\Ao f}_{L^2(\half_+)}^2
	&=
	\int_{\R^{d-1}}
	\int_{0}^{\infty} \sabs{\Ao f(\xx,t)}^{2}\d t\d \xx
\\	&=
	\int_{\R^{d-1}}
	\int_{0}^{\infty} \sabs{(t^{-1/2}\, \wave \, \yy^{1/2})f(\xx,t)}^{2}\d t\d \xx
	\\
	&=
	\int_{\R^{d-1}}\int_{0}^{\infty}\frac{\sabs{ \wave (\yy^{1/2} f) (\xx,t)}^{2}}{{t}}\d t\d \xx
	\\
	&=
	\int_{\R^{d-1}}\int_{0}^{\infty}\frac{\sabs{\yy^{1/2} f(\xx,\yy)}^{2}}{{\yy}}\d \yy\d \xx .
	\\
	&=
	\int_{\R^{d-1}}\int_{0}^{\infty} \sabs{f(\xx,\yy)}^{2} d \yy\d \xx .
	\\
	&=
	\norm{f}_{L^2(\half_+)}^2.
\end{align*}

\ref{lem:Aiso2} The set $C_{0}^{\infty}(\half_+)$ is
dense in $L^2(\half_+)$ and, according to \ref{lem:Aiso1}, the operator
$\Ao$ is an isometry on $C_{0}^{\infty}(\half_+)$. Consequently, it follows from the general
Hilbert space theory that the operator $\Ao$ can be  extended in a unique manner to an
isometry $\Ao \colon L^2(\half_+) \to L^2(\R^d)$. Finally, from the construction of
$\Ao $ it is clear that $\Ao \kl{L^2(\half_+)} \subseteq L^2(\half_+)$.
\end{proof}

\subsection{Isometric extension to $L^2(\R^d)$}
For the following considerations it will be convenient to apply the operator $\Ao$ to functions that are defined on $\R^d$ rather than on the half space $\half_+$. That is,
we need to extend the operator $\Ao \colon L^2(\half_+)\to L^2(\half_+)$ in a meaningful way to an operator
$\Ao\colon L^2( \R^d) \to  L^2(\R^d)$. One possibility to do this would be  to consider the wave
equation \eqref{eq:wave1} with initial data $f\in C_{0}^{\infty}(\R^d)$ and then to
proceed as above. However,  any function that is odd in the last variable would be
in the kernel of the resulting operator.  Therefore, we use a different extension
that leads to an isometric operator on $L^2(\R^d)$.

To that end, we define  the operator $\So \colon L^2(\R^d)  \to L^2(\R^d)$
by $(\So f)(\xx,\yy) \coloneqq (\So f)(\xx,-\yy)$.
Then $\So$ is an isometric isomorphism with  $\So^{-1} = \So$  and
$\So(L^2(\half_+)) = L^2(\half_-)$, where $\half_-$ is defined in an obvious way. We are now able to define the announced extension of $\Ao$.

\begin{definition} \label{def:Afull}
We define the operator $\Ao\colon L^2( \R^d) \to  L^2(\R^d)$
by
\begin{equation}
\Ao( f )  \coloneqq
\Ao \kl{ \Po_{L^2(\half_+)} f} +
( \So \circ \Ao  \circ \So) \kl{ \Po_{L^2(\half_-)} f } \,.
\end{equation}
Here and below $\Po_{V} f$ denotes the  orthogonal projection onto a closed
subspace $V$ of $L^2(\R^d)$.
\end{definition}

\begin{theorem} \label{thm:isofull}
The operator $\Ao\colon L^2(\R^d) \to  L^2(\R^d)$ is an isometry.
\end{theorem}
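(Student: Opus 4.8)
The plan is to exploit the orthogonal decomposition $L^2(\R^d) = L^2(\half_+) \oplus L^2(\half_-)$ together with the isometry property of $\Ao$ on the half space established in Lemma~\ref{lem:Aiso}. Definition~\ref{def:Afull} has been set up precisely so that the two summands defining $\Ao f$ land in the mutually orthogonal subspaces $L^2(\half_+)$ and $L^2(\half_-)$, and the theorem then follows from the Pythagorean theorem.

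In detail, write $f_\pm \coloneqq \Po_{L^2(\half_\pm)} f$, so that $f = f_+ + f_-$ with $f_+ \perp f_-$ and $\norm{f}_{L^2(\R^d)}^2 = \norm{f_+}_{L^2(\half_+)}^2 + \norm{f_-}_{L^2(\half_-)}^2$. First I would record the relevant mapping properties. By Lemma~\ref{lem:Aiso}\ref{lem:Aiso2}, $\Ao$ maps $L^2(\half_+)$ isometrically into itself, so $\Ao f_+ \in L^2(\half_+)$ and $\norm{\Ao f_+}_{L^2(\half_+)} = \norm{f_+}_{L^2(\half_+)}$. Since $\So$ is an isometric involution with $\So(L^2(\half_+)) = L^2(\half_-)$, we also have $\So(L^2(\half_-)) = L^2(\half_+)$; hence the composition $\So \circ \Ao \circ \So$ maps $L^2(\half_-)$ isometrically into $L^2(\half_-)$, by applying $\So$ to move into $L^2(\half_+)$, then $\Ao$ which preserves $L^2(\half_+)$, and finally $\So$ to return to $L^2(\half_-)$, each step being an isometry. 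In particular $(\So \circ \Ao \circ \So) f_- \in L^2(\half_-)$ and $\norm{(\So \circ \Ao \circ \So) f_-}_{L^2(\half_-)} = \norm{f_-}_{L^2(\half_-)}$.

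Combining these facts, and using that $\Ao f_+ \in L^2(\half_+)$ and $(\So \circ \Ao \circ \So) f_- \in L^2(\half_-)$ are orthogonal in $L^2(\R^d)$, the Pythagorean identity yields
\begin{align*}
  \norm{\Ao f}_{L^2(\R^d)}^2
  &= \norm{\Ao f_+ + (\So \circ \Ao \circ \So) f_-}_{L^2(\R^d)}^2 \\
  &= \norm{\Ao f_+}_{L^2(\half_+)}^2 + \norm{(\So \circ \Ao \circ \So) f_-}_{L^2(\half_-)}^2 \\
  &= \norm{f_+}_{L^2(\half_+)}^2 + \norm{f_-}_{L^2(\half_-)}^2
  = \norm{f}_{L^2(\R^d)}^2 .
\end{align*}
Since $\Ao$ is manifestly linear (a sum of compositions of the linear operators $\Po_{L^2(\half_\pm)}$, $\Ao|_{L^2(\half_+)}$ and $\So$), norm preservation is equivalent to the isometry property, and this completes the argument.

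I do not expect a genuine obstacle: the only point requiring care is the orthogonality of the two images, which is exactly the reason the odd-reflection extension was chosen, as it guarantees that $\Ao$ does not couple the two half spaces. If one prefers to verify preservation of inner products directly rather than via norms plus linearity, one can instead expand $\inner{\Ao f, \Ao g}$ and use the same orthogonality to discard the cross terms $\inner{\Ao f_+, (\So \circ \Ao \circ \So) g_-}$ and $\inner{(\So \circ \Ao \circ \So) f_-, \Ao g_+}$.
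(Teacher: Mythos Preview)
Your proof is correct and follows essentially the same approach as the paper: decompose $f$ orthogonally along $L^2(\half_+)\oplus L^2(\half_-)$, use Lemma~\ref{lem:Aiso} together with the fact that $\So$ is an isometric involution to see that each piece of $\Ao f$ has the right norm and lies in the corresponding half-space, and apply the Pythagorean identity. The paper's proof is simply a more compressed version of this, stating only the decomposition and invoking Definition~\ref{def:Afull} and Lemma~\ref{lem:Aiso} without spelling out the orthogonality of the images or the isometry of $\So\circ\Ao\circ\So$ on $L^2(\half_-)$.
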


\begin{proof}
 Any function  $f \in  L^2( \R^d) $  can  be written in the form $f =
 \Po_{L^2(\half_+)} f + \Po_{L^2(\half_-)} f$ and satisfies $\snorm{f}_{L^2}^2
 =  \snorm{\Po_{L^2(\half_+)} f}_{L^2}^2 +  \snorm{\Po_{L^2(\half_-)} f}_{L^2}^2$.
 From Definition \ref{def:Afull} and Lemma~\ref{lem:Aiso} we then conclude that
 $\snorm{\Ao f}_{L^2}^2 = \snorm{f}_{L^2}^2 $  for every $f \in  L^2( \R^d) $.
 \end{proof}

In what follows, we will also consider the operator
\begin{equation*}
	\wave \colon \dom{\wave}
	\subseteq L^2(\R^d) \to L^2(\R^d)
	\colon
	h \mapsto   \kl{ \abs{t}^{1/2} \circ  \Ao \circ \abs{\yy}^{-1/2}   } h
\end{equation*}
as a densely defined operator on $L^2(\R^d)$.
Here, $ \dom{\wave}$  is the set of all (equivalence classes of) functions
$h \colon \R^d \to \R$ such that  $\abs{\yy}^{-1/2} h \in L^2(\R^d)$.

\subsection{Explicit expressions for $\Ao$ and its dual}
In this section we will state explicit expressions for the operator $\Ao$  and its dual.
For that purpose, we consider the spherical Radon transform  $\Mo$, which is defined as follows:
\begin{equation}\label{eq:sm}
 	\forall (\xx, t) \in \partial\half_+ \times (0, \infty)
 	\colon \qquad
    (\Mo f)(\xx, r)
    :=
    \frac{1}{ \abs{\sph^{n-1}}}
    \int_{\sph^{n-1}} f(\xx + r \omega) \, dS(\omega)
    \,,
\end{equation}
where  $\sph^{n-1} := \set{ x \in \R^n \mid  \abs{x} = 1}$ denotes the unit sphere in $\R^n$ and $\abs{\sph^{n-1}}$ is its surface
measure. A simple calculation (application of  Fubini's Theorem) shows that the dual $\Mo^*$ of the operator $\Mo$ is given by
\begin{equation} \label{eq:bp}
    (\Mo^* g) (\xx, \yy) = \frac{1}{\abs{\sph_{n-1}}}
    \int_{\R^{n-1}}
    \frac{g\kl{\zz, \sqrt{\norm{\zz -  \yy}^2 + \xx^2} }}{(\norm{\zz -  \yy}^2 + \xx^2)^{(n-1)/2}}
    \, dS(\zz) \,.
\end{equation}
The operator $\Mo^* $  is called \emph{spherical backprojection operator}, because
$\kl{\Mo^*  g}(\xx,\zz)$  integrates the function $g$ over all spheres $(\zz, r)$ that pass through the point $(\xx,\zz)$.

We will also consider the (fractional) differentiation operators
\begin{equation}\label{eq:frac}
\Do_t^\mu  := \begin{cases}
 \kl{ (2t)^{-1}\partial_t }^\mu \,,  & \text{ for } \mu \in \N \,, \\
 \kl{ (2t)^{-1}\partial_t }^{\mu+1/2} \Ao \,,   &  \text{ for } \mu \in \N -1/2 \,.
  \end{cases}
  \end{equation}
The formal $L^2$ adjoints of those operators are given by $(\Do_t^\mu)^* = (-1)^\mu t \Do_t^\mu t^{-1}$
for  $\mu \in \N$ and $(\Do_t^\mu)^* = (-1)^{\mu+1/2} \Ao^*  t \Do_t^{\mu+1/2} t^{-1} $ for $\mu \in \N - 1/2$,
where
\[
    (\Ao^* g) (t)=
    \frac{2t}{\sqrt \pi} \int_{t}^\infty \frac{g(s)}{\sqrt{s^2-t^2}}\,  ds
\,,  \qquad \text{ for }
t \in (0, \infty) \,.\]

We are now able to provide explicit expressions for the operator $\Ao$  and its dual $\Ao^*$.
\begin{lemma} \mbox{}
We have
\begin{align} \label{eq:wave}
  	\forall f \in C_{0}^{\infty}(\half_+)
	\colon \quad
	&
	\Ao f
  	=
	\frac{\sqrt \pi}{2 \Gamma(n/2)}
	 \kl{ t^{-1/2}
	  \Do_t^{(n-3)/2} t^{n-2}
	 \Mo    \yy^{1/2}  \, f
	 }  \,,\\
	 \label{eq:wave-ad}
	 \forall g \in C_{0}^{\infty}(\half_+)
	\colon \quad
	&\Ao^\ast g
	 =
	 \frac{\sqrt \pi}{2 \Gamma(n/2)}
	 \kl{\yy^{1/2} \Mo^* t^{n-2}
	 \bigl (\Do_t^{(n-3)/2} \bigr)^*
	 t^{-1/2}  g} \,.
\end{align}
\end{lemma}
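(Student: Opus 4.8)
The plan is to derive \eqref{eq:wave} first and then read off \eqref{eq:wave-ad} as its formal $L^2(\half_+)$-transpose; both reduce to one substantive input --- a spherical-mean representation of the wave field --- together with two bookkeeping steps, namely absorbing the definition of $\Ao$ and transposing a composition of operators.

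The substantive input is the classical identity
\[
  \wave h \;=\; \frac{\sqrt\pi}{2\,\Gamma(n/2)}\;\Do_t^{(n-3)/2}\,t^{\,n-2}\,\Mo\,h ,
  \qquad h\in C_0^\infty(\half_+),
\]
which expresses the solution of \eqref{eq:wave1} on the observation hyperplane through the spherical mean transform $\Mo$. For odd $n$ this is the Poisson--Kirchhoff formula, $(n-3)/2$ being a nonnegative integer so that $\Do_t^{(n-3)/2}=\kl{(2t)^{-1}\partial_t}^{(n-3)/2}$; for even $n$ it is the method-of-descent counterpart, and this is precisely why a factor of the Abel-type operator (the one whose $L^2(0,\infty)$-adjoint is $g\mapsto \tfrac{2t}{\sqrt\pi}\int_t^\infty g(s)(s^2-t^2)^{-1/2}\,\d s$) is built into the half-integer case of \eqref{eq:frac}. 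I would either cite this directly --- see \cite{FinHalRak07,Kun07a,haltmeier14universal}, and \cite{Narayanan:2010cy} for the even-dimensional normalisation --- or re-derive it: for odd $n$ one checks from the Euler--Poisson--Darboux equation satisfied by $r\mapsto (\Mo h)(\xx,r)$ that the right-hand side solves \eqref{eq:wave1}, and for even $n$ one descends from $\R^{n+1}$ to $\R^{n}$, which produces exactly the Abel integral via Sonine's formula. Along the way one must confirm the constant: for odd $n$, rewriting the Kirchhoff operator $\kl{t^{-1}\partial_t}^{(n-3)/2}$ as $2^{(n-3)/2}\Do_t^{(n-3)/2}$ turns the Kirchhoff constant $1/(n-2)!!$ into $2^{(n-3)/2}/(n-2)!!$, which equals $\sqrt\pi/(2\Gamma(n/2))$ by the duplication formula for $\Gamma$, and the even-dimensional descent produces the same constant.

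Granted the displayed identity, \eqref{eq:wave} follows at once from the definition \eqref{eq:waveP}: for $f\in C_0^\infty(\half_+)$ one has $\yy^{1/2}f\in C_0^\infty(\half_+)$, hence $\Ao f = t^{-1/2}\wave\kl{\yy^{1/2}f} = \tfrac{\sqrt\pi}{2\Gamma(n/2)}\,t^{-1/2}\Do_t^{(n-3)/2}t^{\,n-2}\Mo\,\yy^{1/2}f$. For \eqref{eq:wave-ad} I would transpose this composition factor by factor: the adjoint of multiplication by $t^{-1/2}$, $t^{\,n-2}$ or $\yy^{1/2}$ is multiplication by the same function, the adjoint of $\Mo$ is $\Mo^*$ as in \eqref{eq:bp} (Fubini), and the adjoint of $\Do_t^{(n-3)/2}$ is the operator $(\Do_t^{(n-3)/2})^*$ recorded just before the lemma; reversing the order of the factors gives exactly $\Ao^\ast g = \tfrac{\sqrt\pi}{2\Gamma(n/2)}\kl{\yy^{1/2}\Mo^* t^{\,n-2}(\Do_t^{(n-3)/2})^* t^{-1/2}g}$. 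Since all functions in play are smooth and compactly supported, the integrations by parts and applications of Fubini's theorem carry no boundary terms, so these transposition identities are genuine equalities on $C_0^\infty(\half_+)$.

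The only genuinely delicate step is the even-dimensional case of the substantive input: there the spherical-mean representation is not elementary, and one has to match the half-integer operator $\Do_t^{(n-3)/2}$ of \eqref{eq:frac} against the Abel/Sonine integral delivered by descent, while tracking the normalising constant precisely enough that it again collapses to $\sqrt\pi/(2\Gamma(n/2))$. Everything else --- the passage from $\wave$ to $\Ao$, and the computation of the adjoint --- is routine.
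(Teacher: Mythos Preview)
Your proposal is correct and follows essentially the same route as the paper: the paper derives \eqref{eq:wave} by citing the classical spherical-mean formula for the wave equation (Courant--Hilbert, Evans) and then obtains \eqref{eq:wave-ad} ``by applying calculation rules for the adjoint.'' You simply spell out in more detail what the paper leaves implicit --- the odd/even dimensional split, the verification of the constant via the duplication formula, and the factor-by-factor transposition --- so your argument is a fleshed-out version of theirs rather than a different approach.
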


\begin{proof}
The identity \eqref{eq:wave} follows from the well known explicit
expression for the solution of the wave
equation (see \cite[page~682]{CouHil62} and \cite[page~80]{Eva98}).
The identity \eqref{eq:wave-ad} follows from \eqref{eq:wave} by applying
calculation rules for the adjoint.
\end{proof}

\section{Wavelet vaguelette decomposition (WVD)}
\label{sec:wvd}

In this section, for a given wavelet basis $(\psi_\lambda)_{\la \in \La}$ of $\R^d$, we construct the WVD
of the operators $\wave$ and $\Ao$ that we
defined in the previous section and prove inversion formulae for the case of exact data. To that end, we particularly will make use of the isometry relation that we proved in Section \ref{sec:pat}.

The basic idea of the WVD is to start with an orthogonal wavelet basis
and to construct  a  possibly non-orthogonal  basis  system of the image space in such a way
that the operator  and the  prior information are simultaneously (nearly) diagonalized \cite{Donoho:1995gp}. For readers convenience, we summarized some basic facts about wavelets in Appendix \ref{app:wavelets}.

\subsection{The idea of  the WVD}

Let $\Ko \colon  \dom{\Ko} \subseteq L^2(\R^d) \to L^2(\R^d)$ be a linear, not necessarily bounded, operator and let
$(\wwave_\la)_{\la \in \La}$ be an orthonormal wavelet basis of  $L^{2}(\R^{d})$.  The construction of a wavelet-vaguelette decomposition for the operator $\Ko$ amounts to finding families $(u_\la)_\la$, $(v_\la)_\la$ in $L^2(\R^d)$ satisfying the following properties:
\begin{enumerate}[label=(WVD\arabic*), itemindent =4em, leftmargin =1em]
\item Quasi-singular relations (with $\la  = (j,k,\eps)$):
\begin{align*}
	\Ko \wwave_\la &= \kappa_{j} v_\la,\\
	\Ko^{\ast} u_\la &= \kappa_{j}\wwave_\la \,.
\end{align*}

\item Biorthogonal relations:
$\langle u_\la,v_{\la'}\rangle_{L^2} = \delta_{\la,\la'}$.

\item Near-orthogonality relations:
\begin{align} \label{eq:no1}
\forall c \in \ell^2(\La) \colon \quad
\Bigl\lVert \sum_{\la \in \La} c_\la u_\la \Bigr\rVert_{L^2}&\asymp \norm{c}_{\ell_{2}} \,,
\\\label{no2}
\forall c \in \ell^2(\La) \colon \quad
\Bigl\lVert\sum_{\la \in \La}c_\la v_\la\Bigr\rVert_{L^2}&\asymp \norm{ c }_{\ell_{2}} \,.
\end{align}
Here, $ f \asymp g$ means that there are constants $A,B>0$
such that $A\,  g\leq f \leq B\,  g$.
\end{enumerate}

Such a decomposition $(\wwave_\la, u_\la, v_\la, \kappa_j)_{\la \in \La}$ (if it exists) is called a WVD for the operator $\Ko$. Given such WVD for an operator $\Ko$, one can always obtain an explicit inversion formula for the operator $\Ko$ of the form
\begin{equation}
\label{eq:wvd inversion formula}
	\forall f \in \dom{\Ko} \colon \quad
	f = \sum_{\la \in \La}
	\kappa_{j}^{-1}\langle
	\Ko f, u_\la\rangle_{L^2} \wwave_\la \,.
\end{equation}
Note the analogy between \eqref{eq:wvd inversion formula} and the SVD decomposition.
The numbers $\kappa_{j}$ depend here only on the scale parameter $j$ and have the same meaning as singular values in the SVD. Thus, $\kappa_j$ are referred to as quasi singular values. Similarly to the SVD, the decay of the quasi singular values $\kappa_j$ reflects the the ill-posedness of the inverse problem $g=\Ko f$.

A WVD decomposition has been constructed for the classical computed tomography modeled by  the two dimensional Radon transform, where $\kappa_{j} = 2^{j/2}$; see \cite{Donoho:1995gp}. In the case of the two dimensional Radon transform, a generalization of the WVD, a so-called biorthogonal curvelet decomposition was constructed in \cite{CanDon02}. In \cite{colonna2010radon}, the authors derived biorthogonal shearlet decompositions for two and three dimensional Radon transforms.

In the case of photoacoustic tomography, there are no such decompositions available so far. In the next subsection, we establish a WVD for the operators $\wave$ and $\Ao$, which serve as forward operators for  PAT with a planar observation surface, and so automatically obtain an inversion formula for exact data.

\subsection{Construction of the WVD for PAT}
Let $(\wwave_\la)_{\la\in \La}$  be a wavelet basis of $L^2(\R^d)$.
The desired wavelet-vaguelette decomposition of $\wave$ and $\Ao$ is in fact a direct  consequence of the isometry relation of  Theorem~\ref{thm:isofull}.

\begin{theorem}[The WVD for PAT] \label{thm:wvd}
For every $\la \in \La$ define $u_\la \coloneqq \Ao \wwave_\la$ and let $v_\lambda = u_\lambda$.
Then, the family $(\wwave_\la, u_\la, v_\la, 1)_{\la \in \La}$
is a  WVD for the operator $\Ao$. Moreover, we have the inversion formulae
\begin{align} \label{eq:wvd-A}
&\forall f \in L^2(\R^d) \colon
&&f = \sum_{\la \in \La} \langle \Ao f, u_\la\rangle_{L^2} \, \wwave_\la ,
\\ \label{eq:wvd-wave}
&\forall h \in \dom{\wave} \colon
&&h = \sum_{\la \in \La} \inner{\wave  h,  t^{-1/2} u_\la}_{L^2}
\, \yy^{1/2}  \wwave_ \la  .
\end{align}
\end{theorem}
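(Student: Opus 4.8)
The plan is to leverage the fact that $\Ao$ is an isometry of $L^2(\R^d)$ onto a closed subspace (Theorem~\ref{thm:isofull}), which makes the verification of the WVD axioms essentially mechanical. First I would set $u_\la = v_\la \coloneqq \Ao\wwave_\la$ and $\kappa_j \equiv 1$, and check the quasi-singular relations (WVD1). The relation $\Ao\wwave_\la = 1\cdot v_\la$ holds by definition. For $\Ao^\ast u_\la = \wwave_\la$, I would use that an isometry $T$ satisfies $T^\ast T = \id$; hence $\Ao^\ast u_\la = \Ao^\ast\Ao\wwave_\la = \wwave_\la$. Next, the biorthogonality (WVD2): $\inner{u_\la, v_{\la'}}_{L^2} = \inner{\Ao\wwave_\la, \Ao\wwave_{\la'}}_{L^2} = \inner{\wwave_\la, \wwave_{\la'}}_{L^2} = \delta_{\la,\la'}$, again because $\Ao$ preserves inner products. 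Finally, the near-orthogonality (WVD3): for $c\in\ell^2(\La)$, since $\Ao$ is a linear isometry and $(\wwave_\la)$ is orthonormal, $\snorm{\sum_\la c_\la u_\la}_{L^2} = \snorm{\Ao\sum_\la c_\la\wwave_\la}_{L^2} = \snorm{\sum_\la c_\la\wwave_\la}_{L^2} = \norm{c}_{\ell_2}$, so \eqref{eq:no1} and \eqref{no2} hold with $A=B=1$ (in fact with equality, so these are genuine orthonormality relations, not merely frame bounds). One small point to address is that the series $\sum_\la c_\la\wwave_\la$ converges in $L^2$ and $\Ao$, being bounded, commutes with the limit.

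For the inversion formula \eqref{eq:wvd-A}, I would apply the general WVD inversion formula \eqref{eq:wvd inversion formula} with $\kappa_j = 1$, which gives $f = \sum_\la \inner{\Ao f, u_\la}_{L^2}\wwave_\la$ for all $f\in\dom{\Ao} = L^2(\R^d)$; alternatively, and perhaps more transparently, this is just the expansion $f = \sum_\la\inner{f,\wwave_\la}\wwave_\la$ combined with $\inner{f,\wwave_\la} = \inner{\Ao f, \Ao\wwave_\la} = \inner{\Ao f, u_\la}$, valid since $\Ao$ is an isometry. I would include the short derivation of \eqref{eq:wvd inversion formula} itself if it has not already been justified: pair both sides of $f = \sum_\la \inner{f,\wwave_\la}\wwave_\la$ and rewrite the wavelet coefficient using $\inner{f,\wwave_\la} = \kappa_j^{-1}\inner{f,\Ko^\ast u_\la} = \kappa_j^{-1}\inner{\Ko f, u_\la}$.

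The formula \eqref{eq:wvd-wave} for the original operator $\wave$ then follows by unwinding the definition $\wave = \abs{t}^{1/2}\circ\Ao\circ\abs{\yy}^{-1/2}$ on $\dom{\wave}$. Given $h\in\dom{\wave}$, set $f \coloneqq \abs{\yy}^{-1/2}h \in L^2(\R^d)$. Applying \eqref{eq:wvd-A} to $f$ yields $\abs{\yy}^{-1/2}h = \sum_\la\inner{\Ao f, u_\la}_{L^2}\wwave_\la$, hence $h = \sum_\la\inner{\Ao f, u_\la}_{L^2}\,\abs{\yy}^{1/2}\wwave_\la$ (restricting to $\half_+$ where $\yy>0$, so $\abs{\yy}^{1/2}=\yy^{1/2}$). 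It remains to identify the coefficient: $\inner{\Ao f, u_\la}_{L^2} = \inner{\abs{t}^{-1/2}\wave h, u_\la}_{L^2} = \inner{\wave h, \abs{t}^{-1/2}u_\la}_{L^2}$, using $\Ao f = \Ao(\abs{\yy}^{-1/2}h) = \abs{t}^{-1/2}\wave h$ from the definition of $\wave$, and that $\abs{t}^{-1/2}$ is self-adjoint as a multiplication operator. This gives exactly \eqref{eq:wvd-wave}.

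\textbf{Main obstacle.} The conceptual content is entirely carried by the isometry property proved earlier, so there is no serious analytic difficulty; the delicate points are bookkeeping. The one place requiring genuine care is the passage between $\Ao$ and $\wave$ in \eqref{eq:wvd-wave}: one must check that $f=\abs{\yy}^{-1/2}h$ indeed lies in $L^2(\R^d)$ (this is precisely the definition of $\dom{\wave}$), that the weighting operators $\abs{t}^{\pm1/2}$ and $\abs{\yy}^{\pm1/2}$ are handled consistently as (possibly unbounded) multiplication operators with the correct adjoints, and that the restriction from $\R^d$ to $\half_+$ (where the weights are genuinely $t^{1/2}$, $\yy^{1/2}$) is compatible with the support of $u_\la = \Ao\wwave_\la$, which by construction of $\Ao$ lies in $L^2(\half_+)\oplus L^2(\half_-)$. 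These are routine but deserve an explicit sentence each.
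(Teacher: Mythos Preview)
Your proposal is correct and follows essentially the same route as the paper: both arguments rest entirely on the isometry property of $\Ao$ (Theorem~\ref{thm:isofull}), from which the WVD axioms and the formula \eqref{eq:wvd-A} follow immediately, and then derive \eqref{eq:wvd-wave} by substituting $f = \abs{\yy}^{-1/2}h$ and shifting the weight $\abs{t}^{-1/2}$ across the inner product. Your write-up is more explicit in checking (WVD1)--(WVD3) individually and in flagging the domain and weight bookkeeping, but the underlying argument is the same.
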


\begin{proof}\mbox{}
We start with an arbitrary function
$f\in L^2(\R^d)$ and express this function in terms of a wavelet expansion  $f = \sum_{\la \in \La} \langle f,\wwave_\la\rangle_{L^2} \wwave_\la$ with respect to $(\wwave_\la)_{\la \in \La}$.
Then according to the isometry property $(u_\la)_{\la \in \La}$ is an orthonormal basis of $\ran{\Ao}$ and further $\Ao^* u_\la = \wwave_\la $.
In particular, this amounts to a WVD decomposition with $v_\la   = u_\la$ and $\kappa_j =1$. Further, \eqref{eq:wvd-A}  is a consequence
 of the   WVD.
Next, let $f =  \yy^{-1/2}  h$ be an element in $\dom{\wave}$. Then
\begin{equation*}
\inner{\Ao f, u_\la}_{L^2}
=
\inner{t^{-1/2}  \wave  h, u_\la}_{L^2}
=
\inner{\wave  h, t^{-1/2} u_\la}_{L^2} \,.
\end{equation*}
Consequently, applying \eqref{eq:wvd-A}  yields \eqref{eq:wvd-wave}.
\end{proof}

Note that the identity \eqref{eq:wvd-wave} in  Theorem~\ref{thm:wvd}
is an explicit  inversion formula for the operator $\wave$ in the spirit of a
WVD. Instead of an orthonormal wavelet basis it  uses  the family $(\yy^{1/2}\wwave_\la)_\la$,  which is non-orthogonal with respect to the $L^2$ inner product.
Restricted to functions in $L^2(\R^d)$ that vanish outside $K \coloneqq K_+ \cup \So ( K_+ )$, where $K_+ \Subset H_+$ is any compact subset,
the operator $h \mapsto \sabs{\yy}^{1/2} h$ is an isomorphism.
Then, $(\sabs{\yy}^{1/2}\wwave_\la)_{\la \in \La}$ allows to characterize the Besov norm
$\snorm{\edot}_{\B_{p,q}^r}$ of any function  that is supported in $K$.

\begin{figure}[tbh!]
\centering
\includegraphics[width=\textwidth, trim={ 1.7cm 0.5cm 1.7cm 0.3cm}]{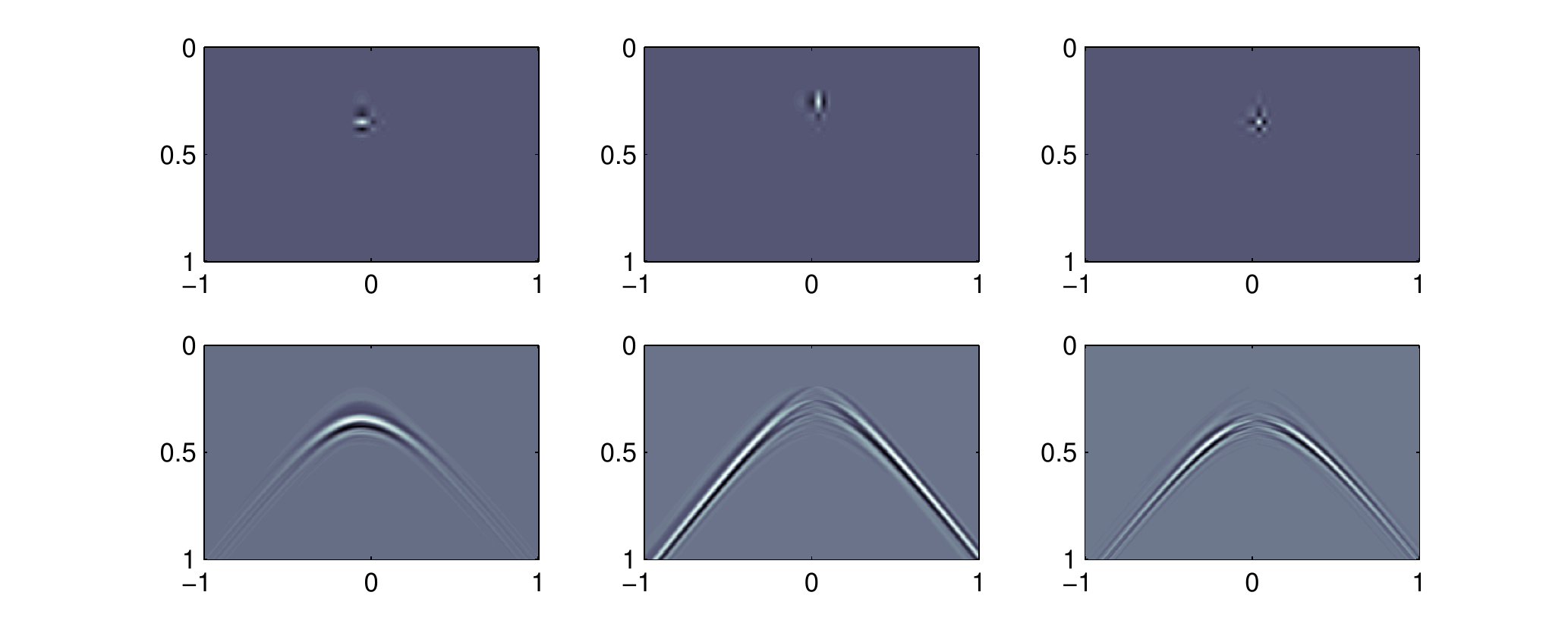}
\caption{\textsc{Wavelets and vaguelettes.}\label{fig:wv}  }
\end{figure}

Figure~\ref{fig:wv} shows a  vertical, a horizontal and a diagonal
Daubechies 10 wavelet and the corresponding vaguelettes
obtained by application of the operator $\Ao$.

\section{Inversion from noisy data}
\label{sec:noisy}
In what follows, we assume $(\wwave_\la)_{\la \in \La}$ to be an
orthonormal compactly supported wavelet  basis of $L^2(\R^d)$.
Further, by $(u_\la)_{\la \in \La}$ we denote the corresponding vaguelette basis of $\ran {\Ao} \subseteq L^2(\R^d)$ that satisfies  $\Ao \wwave_\la = u_\la$.

If exact data are available, then the WVD decomposition~\eqref{eq:wvd-A}  provides an exact reconstruction formula for the unknown $f$. However, in practical  applications the data $\Ao f$ (or $\wave h$) are only known up to some errors (e.g. noise). We therefore assume that we are given erroneous (noisy) data
\begin{equation} \label{eq:noisy}
    \data = \Ao f +  \noise \,,
\end{equation}
where $\noise$ is the error term and $f$ the exact unknown. We consider both the deterministic and the stochastic case, in which different models for $\noise$  are assumed:
\begin{itemize}
\item In the deterministic case, we assume that
a bound $\snorm{\noise}_{L^2} \leq \delta$ is available.

\item In the stochastic case, we assume that $\noise = \delta
\wn$, where  $\wn$ is a white noise process.
\end{itemize}
In the deterministic situation the constant $\delta > 0$ is referred to as the
noise level; in the stochastic case it is
the noise standard deviation.  The goal in both situations is to estimate the
unknown $f \in L^2(\R^d)$ from data in~\eqref{eq:noisy}.

\subsection{Vaguelette-thresholding estimator}

In Section~\ref{sec:wvd} we have shown that the reproducing
formula $f = \Ao^* \Ao f = \sum_{\la \in \La} \inner{\Ao f,  u_\la}_{L^2}
       \wwave_\la$ holds in the case of exact data.
If the data is corrupted by noise, i.e., $g=\Ao f +\noise$, the inner products  $\inner{\Ao f,  u_\la}_{L^2}$ cannot be computed exactly. Instead, they are estimated
by first evaluating $\inner{\data,  u_\la}_{L^2}$ and then
applying   the soft-thresholding operation
\begin{equation} \label{eq:soft}
\soft_\threshp \colon \R \to \R \colon
y \mapsto
\begin{cases}
y  + \threshp & \text{ if } y  <   -\threshp \\
0    & \text{ if  }  y \in [-\threshp, \threshp]\\
y-  \threshp & \text{ if } y  >  \threshp \\
\end{cases}
\end{equation}
with appropriate thresholds $\threshp = \threshp_j$.

\begin{definition}[WVD soft thresholding estimator] \label{def:wvd-soft}
For any nonnegative sequence  $ \threshp = (\threshp_j)_j$, the  WVD soft thresholding-estimator for the solution of
\eqref{eq:noisy} is  defined  by
\begin{equation}\label{eq:wvd-soft}
\Soft_\threshp
\colon L^2(\R^d)  \to L^2(\R^d)
\colon
g \mapsto
    \sum_\la
    \soft_{\threshp_j}(\inner{g, u_\la}_{L^2})
    \, \wwave_\la  \,,
\end{equation}
with the nonlinear soft-thresholding  function $\soft_\mu$ defined in~\eqref{eq:soft}.
\end{definition}

In the deterministic case we assume that the error term $\noise$ is known to satisfy $\snorm{\noise} \leq \delta$.
In this case,  due the isometry property of the operator $\Ao$, one obtains a stable reconstruction by applying the adjoint $\Ao^\ast$,
\begin{equation} \label{eq:astar}
 \forall f \in L^2(\R^d) \colon
 \quad
  \sup_{\snorm{\noise} \leq \delta}
\snorm{ f - \Ao^\ast (\Ao f + \noise)} = \delta  \,.
\end{equation}
As the adjoint operator satisfies $\Ao^\ast = \Soft_0$, i.e., $\Ao^*$ corresponds to the WVD-thresholding estimator with $\threshp = 0$,
it follows that also the class of WVD-thresholding estimators yields the optimal
error   estimate. Without additional knowledge about the noise there is no
need to apply the WVD soft-thresholding estimator with
$\threshp \neq 0$, even if  the function  $f$ is known
to belong to some smoothness class, such as a Sobolev
or Besov ball.   However, if a portion of the  noise energy  is known to
correspond to  high frequency components, then using
non-zero thresholds may significantly outperform
$\Ao^\ast$ given that $f$ belongs to some smoothness class.
For example, this happens in the case of
stochastic white noise,  where the energy-spectrum is
uniformly distributed.

\subsection{Optimality of vaguelette-thresholding}

Suppose $\noise  = \delta Z$, where $Z$ is a
white noise process. In the case of random noise,
it is common to measure  the performance of an estimator
$\est \colon L^2(\R^d) \to L^2(\R^d)$  in terms of the
worst-case risk~\cite{Donoho:1995gp,johnstone2015gaussian,Don95a,tsybakov2009introduction}
of a subset $\Mc \subseteq L^2(\R^d)$,
\begin{equation}\label{eq:risk}
\risk( \est, \delta, \Mc ) = \sup_{f \in \Mc}   \ew\kl{ \norm{ f - \est (\Ao f + \delta Z)}_{L^2}^2 } \,.
\end{equation}
Further define the nonlinear  minimax risk,  the minimax risk using
the WVD estimator~\eqref{eq:wvd-soft} and  the linear minimax risk,
respectively by
\begin{align*}
	\risk_{\rm N}( \delta, \Mc )
	&\coloneqq
	\inf_{\est}
	\sup_{f \in \Mc}
	\ew\kl{ \norm{ f - \est (\Ao f + \delta Z)}_{L^2}^2 }
	 \,,
	\\
	\risk_{\rm W}( \delta, \Mc)
	&\coloneqq
	\inf_{\threshp \in [0,\infty)^\N}
	\sup_{f \in \Mc}
	\ew \kl{ \norm{ f - \Soft_{\threshp} (\Ao f + \delta Z)}_{L^2}^2} \,,
	\\
	\risk_{\rm L}( \delta, \Mc)
	&\coloneqq
	\inf_{\text{$\est$ linear}}
	\sup_{f \in \Mc}
	\ew \kl{ \norm{ f - \est (\Ao f + \delta Z)}_{L^2}^2 } \,.
\end{align*}
From the definition it is clear that no  reconstruction method
$\est \colon L^2(\R^d) \to L^2(\R^d)$ can have worst-case risk  $\risk( \est, \delta, \Mc ) $   smaller than the non-linear minimax risk $\risk_{\rm N} ( \delta, \Mc )$. We are in particular interested in  asymptotic behavior for the case $\delta \to 0$ and $\Mc$ is a ball in a Besov space.

\begin{theorem} \label{thm:random}
Suppose that $r >  d(1/p - 1/2) $ and let $\Mc$ be a ball in the
Besov-norm having the form
\begin{equation}
	\Mc =  \set{ f \in L^2(\R^d) \mid \norm{f}_{\B_{p,q}^r} < \rho}
	\quad \text{ for some $\rho >0$} \,.
\end{equation}
 Then,  as $\delta \to 0$,
the following hold
\begin{enumerate}
\item\label{thm:randomN} $\risk_{\rm N}( \delta, \Mc ) \asymp \delta^{\frac{2r}{r+d/2}}$,
\item\label{thm:randomW}
$\risk_{\rm W}( \delta, \Mc )
\leq c_{\rm W} \,   \risk_{\rm N}( \delta, \Mc )$
for some constant $c_{\rm W}>0$,
\item\label{thm:randomL} $\risk_{\rm L}( \delta, \Mc ) \asymp \delta^{2 \frac{r+ d(1/2-1/p_-)}{r+ d(1-1/p_-)}}$ with $p_- \coloneqq \min \set{2,p}$.
\end{enumerate}
\end{theorem}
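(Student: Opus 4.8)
The plan is to transport the problem from $L^2(\R^d)$ to the sequence space $\ell^2(\La)$ via the WVD of Theorem~\ref{thm:wvd}, where it becomes the classical Gaussian sequence estimation problem over a Besov body, and then invoke the well-developed minimax theory for that model. Concretely, since $\Ao$ is an isometry with $\Ao^*u_\la = \wwave_\la$ and $(u_\la)_{\la\in\La}$ an orthonormal system in $\ran{\Ao}$, writing $\theta_\la = \inner{f,\wwave_\la}_{L^2}$ we have $\inner{\Ao f + \delta Z, u_\la}_{L^2} = \theta_\la + \delta\, \inner{Z, u_\la}_{L^2}$, and because $(u_\la)$ is orthonormal the variables $\inner{Z, u_\la}_{L^2}$ are i.i.d.\ standard Gaussians. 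Thus the observation model is exactly the white-noise sequence model $y_\la = \theta_\la + \delta\, g_\la$ with $g_\la \sim \mathcal N(0,1)$ i.i.d.; moreover, by Parseval, $\norm{f - \est(\Ao f + \delta Z)}_{L^2}^2$ equals the $\ell^2$-loss between $(\theta_\la)$ and its estimate, and $\Soft_\threshp$ acts coordinatewise as soft-thresholding $\soft_{\threshp_j}$ on $y_\la$. Finally, since $\Mc$ consists of functions supported in a fixed compact set $K$ (this is where one uses the remark after Theorem~\ref{thm:wvd} that $\sabs{\yy}^{1/2}$ is an isomorphism on such functions, and the wavelet characterization of $\B^r_{p,q}$), the constraint $\norm{f}_{\B^r_{p,q}} < \rho$ translates into the statement that $(\theta_\la)$ lies in a Besov body $\Theta^r_{p,q}(\rho)$, i.e.\ $\sum_j 2^{jq(r + d/2 - d/p)}\bigl(\sum_{k,\eps}\abs{\theta_{j,k,\eps}}^p\bigr)^{q/p} \le \rho^q$ (up to equivalence of norms).

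Once the reduction is in place, each of the three claims follows from a cited result in the Gaussian-sequence literature. For \ref{thm:randomN}, the nonlinear minimax rate over a Besov body under the hypothesis $r > d(1/p - 1/2)$ (which ensures the body is compact in $\ell^2$, and that the "sparse/dense" phase boundary does not affect the leading exponent) is the classical Donoho--Johnstone rate $\delta^{2r/(r+d/2)}$; I would cite Donoho--Johnstone and Johnstone's monograph. For \ref{thm:randomW}, I would invoke the near-minimaxity of coordinatewise soft-thresholding with a level-dependent (or universal) threshold: choosing $\threshp_j$ appropriately — e.g.\ a threshold of order $\delta\sqrt{2\log(\delta^{-2})}$ on the finest relevant scales and a suitable cutoff scale — a soft-threshold estimator attains the nonlinear minimax rate up to a multiplicative constant $c_{\rm W}$ (which may absorb a logarithmic factor, or not, depending on whether one uses an oracle-tuned level-dependent threshold; the statement only asks for a constant, so the level-dependent version suffices). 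For \ref{thm:randomL}, the linear minimax rate over a Besov body is again a classical computation: linear estimators "see" only the hull of $\Theta^r_{p,q}$, effectively replacing $p$ by $p_- = \min\{2,p\}$, and the rate becomes $\delta^{2(r + d(1/2 - 1/p_-))/(r + d(1 - 1/p_-))}$; I would cite the same references.

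The main obstacle — and the only genuinely substantive step beyond bookkeeping — is the \textbf{transfer of the Besov constraint through the operator}: the estimator reconstructs $f = \sum_\la \inner{\Ao f, u_\la}_{L^2}\wwave_\la$ as a function on $\R^d$, but $\dom{\wave}$ and the natural geometry involve the weight $\sabs{\yy}^{\pm 1/2}$, and one must be careful that the vaguelette coefficients $\inner{g, u_\la}_{L^2}$ really are the wavelet coefficients of the target in the basis $(\wwave_\la)$ and not of some reweighted version. The clean way around this is exactly the route the paper sets up: restrict attention to $f$ supported in the fixed compact $K = K_+ \cup \So(K_+)$ with $K_+ \Subset \half_+$, on which $\sabs{\yy}^{1/2}$ is a bounded invertible multiplier, and use that on such functions $(\wwave_\la)$ characterizes $\B^r_{p,q}$ with constants depending only on $K$ and the wavelet. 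Then the WVD of Theorem~\ref{thm:wvd} gives $\theta_\la = \inner{\Ao f, u_\la}_{L^2}$ directly, with no weight, and the sequence-space reduction is exact. The remaining points — that the noise coordinates are i.i.d.\ standard normal (immediate from orthonormality of $(u_\la)$ and the definition of white noise), that Parseval gives the loss identity, and that soft-thresholding is coordinatewise — are routine, and the three rates are then quotations.
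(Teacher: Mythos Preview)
Your proposal is correct and follows essentially the same route as the paper, which simply invokes \cite[Theorem~4]{Donoho:1995gp} in the special case $\kappa_j = 1$: the WVD of Theorem~\ref{thm:wvd} reduces the problem exactly to the Gaussian sequence model over a Besov body, and all three rates are then classical. Your ``main obstacle'' paragraph is unnecessary, however: the theorem is stated for the isometry $\Ao$ (not for $\wave$), so $\theta_\la = \inner{f,\wwave_\la}_{L^2} = \inner{\Ao f, u_\la}_{L^2}$ holds for every $f\in L^2(\R^d)$ without any weight $\sabs{\yy}^{\pm 1/2}$ or compact-support restriction entering the picture.
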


\begin{proof}
Follows from \cite[Theorem~4]{Donoho:1995gp}
for the special case $\kappa_j  = 1$.
\end{proof}

Theorem \ref{thm:random} implies that, despite its simplicity, the
WVD-thresholding estimator is order optimal on any Besov-ball
and the rate cannot  be improved by any
other estimator (up to some constant factors). On the other hand, if $p< 2$, then the exponent
in the linear minimax rate is strictly smaller than the  exponent  in the
non-linear minimax rate, $ \tfrac{r+ d(1/2-1/p_-)}{r+ d(1-1/p_-)} < \tfrac{r}{r+ d/2}$. Therefore, no linear estimator can give the
optimal convergence order. In particular, this implies that the
 WVD-thresholding estimator outperforms filter based regularization methods
 including the truncated SVD or quadratic Tikhonov regularization.

 \subsection{Variational characterizations and extensions}

The  WVD-based soft thresholding estimation can be
characterized via various variational minimization
schemes that, as we shall discuss later, in turn offer several extension of the
WVD-estimators.

\begin{theorem}[Variational characterizations of vaguelette thresholding]
Let $(\threshp_j)_j$\label{thm:var} be a sequence of thresholds
and  let $g, \hat f \in L^2(\R^d)$.
Then the following assertions  are equivalent:
\begin{enumerate}[label=(\arabic*)]
\item\label{thm:var1} $\hat f  = \Soft_\threshp (g)$;
\item\label{thm:var2} $\hat f  = \argmin \set{  \tfrac{1}{2}
\norm{\Ao f- g}^2 + \sum_{\la \in \La}
\threshp_j  \abs{\inner{\wwave_\la, f}} \mid f \in L^2(\R^d)}$;
\item\label{thm:var3} $\hat f$ is the unique solution of the  constraint optimization problem
  \begin{equation} \label{eq:wvd-const}
	\left\{
	\begin{aligned}
	&\min_{f \in L^2(\R^d)}  &&
	\frac{1}{2} \norm{f}_{L^2}^2 \\
	&\text{such that} &&
	\max_{\la \in \La }
	\frac{\abs{\inner{u_\la, g -  \Ao f }}}
	{\threshp_j}
	\leq   1
	\,.
	\end{aligned}
	\right.
\end{equation}
\end{enumerate}
\end{theorem}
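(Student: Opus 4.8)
The plan is to establish the chain of equivalences $\ref{thm:var1} \Leftrightarrow \ref{thm:var2} \Leftrightarrow \ref{thm:var3}$ by exploiting the isometry $\Ao$ (Theorem~\ref{thm:isofull}) to reduce the variational problems to the classical, coordinate-wise situation. The key structural fact is that, since $\Ao$ is an isometry, the vaguelettes $(u_\la)_\la$ form an orthonormal system and $\Ao^\ast u_\la = \wwave_\la$; consequently, expanding $f = \sum_\la \inner{\wwave_\la, f} \wwave_\la$ and using $\Ao f = \sum_\la \inner{\wwave_\la, f} u_\la$, the fidelity term decouples as
\begin{equation*}
	\tfrac12 \norm{\Ao f - g}^2
	= \tfrac12 \sum_{\la \in \La} \abs{\inner{\wwave_\la, f} - \inner{u_\la, g}}^2 + \tfrac12 \norm{\Po_{\ran{\Ao}^\perp} g}^2 \,,
\end{equation*}
the last term being independent of $f$. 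Here I would use that $\inner{\Ao f, u_\la} = \inner{f, \Ao^\ast u_\la} = \inner{f, \wwave_\la}$ and that $(u_\la)_\la$ is an orthonormal basis of $\ran{\Ao}$, so Parseval applies on that subspace.

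For $\ref{thm:var1} \Leftrightarrow \ref{thm:var2}$: writing $a_\la \coloneqq \inner{\wwave_\la, f}$ and $b_\la \coloneqq \inner{u_\la, g}$, the functional in \ref{thm:var2} becomes $\sum_\la \bigl( \tfrac12 \abs{a_\la - b_\la}^2 + \threshp_j \abs{a_\la} \bigr)$ plus a constant, which separates over $\la$. Each scalar summand is minimized by the classical soft-thresholding rule $a_\la = \soft_{\threshp_j}(b_\la)$ — the well-known proximal-operator identity for $\tfrac12(\edot - b)^2 + \threshp\abs{\edot}$ — and this minimizer is unique by strict convexity. Reassembling, $\hat f = \sum_\la \soft_{\threshp_j}(\inner{u_\la, g}) \wwave_\la = \Soft_\threshp(g)$, which is exactly \ref{thm:var1}; I should note the minimization is over all of $L^2(\R^d)$, and the minimizer lies in $L^2$ because $(\soft_{\threshp_j}(b_\la))_\la \in \ell^2$ whenever $(b_\la)_\la \in \ell^2$ (soft-thresholding is $1$-Lipschitz and fixes $0$).

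For $\ref{thm:var1} \Leftrightarrow \ref{thm:var3}$: this is the convex-duality / Karush–Kuhn–Tucker step. The problem \eqref{eq:wvd-const} minimizes $\tfrac12\norm{f}_{L^2}^2$ over the convex set $\{ f : \abs{\inner{u_\la, g - \Ao f}} \le \threshp_j \ \forall \la \}$; in the decoupled coordinates $a_\la = \inner{\wwave_\la, f}$ this reads $\min \tfrac12\sum_\la a_\la^2$ subject to $\abs{b_\la - a_\la} \le \threshp_j$ for all $\la$ (note $\inner{u_\la, \Ao f} = \inner{\wwave_\la, f} = a_\la$), which again separates. The one-dimensional problem $\min \tfrac12 a^2$ over $\abs{a - b} \le \threshp$ has the unique solution $a = \soft_\threshp(b)$: if $\abs{b}\le\threshp$ the constraint permits $a=0$; otherwise the projection of $0$ onto the interval $[b-\threshp, b+\threshp]$ is the endpoint nearest $0$, namely $b - \sign(b)\threshp$. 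Summing gives $\hat f = \Soft_\threshp(g)$, i.e. \ref{thm:var1}, and uniqueness follows from strict convexity of $\tfrac12\norm{\edot}^2$ on a nonempty closed convex set. Alternatively one can argue via Fenchel duality that \ref{thm:var2} and \ref{thm:var3} are dual problems in the sense that the subgradient optimality conditions coincide, but the coordinate-wise reduction is cleaner and self-contained.

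The main obstacle is purely bookkeeping rather than conceptual: one must be careful that the series manipulations are legitimate on the infinite index set $\La$ (using $(u_\la)_\la$ orthonormal so that Parseval's identity and termwise minimization of a separable nonnegative sum are valid), and that the constraint set in \ref{thm:var3} is nonempty — which it is, since $f = \Soft_\threshp(g)$ satisfies $\abs{\inner{u_\la, g - \Ao f}} = \abs{b_\la - \soft_{\threshp_j}(b_\la)} \le \threshp_j$ by the elementary bound $\abs{b - \soft_\threshp(b)} \le \threshp$. Everything else is the standard scalar soft-thresholding calculus applied coordinatewise, transported through the isometry $\Ao$.
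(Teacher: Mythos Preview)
Your proposal is correct and follows essentially the same route as the paper's proof: both arguments exploit the isometry of $\Ao$ so that $(u_\la)_\la$ is an orthonormal basis of $\ran{\Ao}$ with $\Ao^\ast u_\la = \wwave_\la$, decompose the fidelity term via Parseval (splitting off the $f$-independent component $\Po_{\ran{\Ao}^\perp} g$), and then reduce both \ref{thm:var2} and \ref{thm:var3} to separable one-dimensional problems whose minimizers are given by scalar soft-thresholding. Your write-up is in fact slightly more careful than the paper's, since you explicitly verify that the thresholded coefficients remain in $\ell^2$ and that the constraint set in \ref{thm:var3} is nonempty.
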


\begin{proof} \mbox{}
$\ref{thm:var1} \Leftrightarrow \ref{thm:var2}$:
Let  $\hat f$ denote the minimizer of $\tfrac{1}{2}
\norm{\Ao f- g}^2 + \sum_{\la \in \La}
\threshp_j  \abs{\inner{\wwave_\la, f}}$.
Because $(u_\la)_{\la \in \La}$ is an orthonormal basis of
$\ran{\Ao}$ we have $\snorm{\Ao f - g}^2
=
\snorm{\pr_{\ran{\Ao}^\bot} (g)}^2 + \sum_{\la \in \La} \abs{\inner{\Ao f - g, u_\la}}^2$. Further,
$\sum_{\la \in \La}
\threshp_j  \abs{\inner{\wwave_\la, f}}
= \sum_{\la \in \La}
\threshp_j  \abs{\inner{u_\la, \Ao f}}$, which shows
 that $\hat f$ is the unique minimizer of
\begin{equation*}
\sum_{\la \in \La }
\tfrac{1}{2}
\abs{\inner{\Ao f - g, u_\la}}^2
+\threshp_j  \abs{\inner{u_\la, \Ao f}} \,.
\end{equation*}
The latter functional is minimized by minimizing  every
summand
\begin{equation*}
\Phi (\inner{\Ao f , u_\la}; \threshp_j)
=
\tfrac{1}{2}
\abs{\inner{\Ao f , u_\la} - \inner{g, u_\la}}^2
+\threshp_j  \abs{\inner{u_\la, \Ao f}}
\end{equation*}
independently in the first argument. The minimizer is given by one-dimensional soft thresholding which gives
$ \sinner{\hat f, \wwave_\la} = \sinner{\Ao \hat f, u_\la} =
\soft_{\threshp_k}(\sinner{g, u_\la})$ and therefore $\hat f  =
\Soft_\threshp (g)$.

$\ref{thm:var1} \Leftrightarrow \ref{thm:var3}$:
Let $\hat f$ denote the solution of~\eqref{eq:wvd-const}.
Using that $(\wwave_\la)_{\la \in \La}$ is an orthogonal basis
 and $\wwave_\la = \Ao^* u_\la$, shows that $\hat f$
 can be equivalently characterized as the minimizer of
  \begin{equation} \label{eq:wvd-const2}
	\left\{
	\begin{aligned}
	&\min_{f \in L^2(\R^d)}  &&
	\frac{1}{2} \sum_{\la \in \La }
	\abs{\inner{\Ao f , u_\la}}^2 \\
	&\text{such that} &&
	\max_{\la \in \La }
	\frac{\abs{\inner{u_\la, g -  \Ao f }}}
	{\threshp_j}
	\leq   1
	\,.
	\end{aligned}
	\right.
\end{equation}
The minimization  problem \eqref{eq:wvd-const2} can again be
solved separately  for  every component $\inner{\Ao f , u_\la}$ which is again given by the one-dimensional soft thresholding
$\sinner{\Ao \hat f, u_\la} =
\soft_{\threshp_j}(\sinner{g, u_\la})$. As above this implies
$\hat f  = \Soft_{\threshp} (g)$.
\end{proof}

The variational characterizations of Theorem~\ref{thm:var}
have several important implications. First, they provide an explicit
minimizer for the $\ell^1$-Tikhonov functional
\begin{equation*}
\tik_{\data,\threshp}(f) \coloneqq
\frac{1}{2}
\norm{\Ao f- g}^2 + \sum_{\la \in \La}
\threshp_j  \abs{\inner{\wwave_\la, f}},
\end{equation*}
which in general has to be minimized by an iterative algorithm,
such as the iterative soft thresholding algorithm and its
variants  \cite{CombPes11,ComWaj05,daubechies2004iterative,teboulle2009fast}.
For the analysis of $\ell^1$-Tikhonov regularization for inverse problems see \cite{daubechies2004iterative,grasmair2008sparse,grasmair2011necessary,lorenz2008convergence}.  
Another important consequence is that the WVD-soft thresholding estimator can be generalized in various directions. In particular, one can get a generalization of \eqref{eq:wvd-const} by replacing the $L^2$-norm in \eqref{eq:wvd-const} by an appropriate regularization functional $\rf$ (for example, $\rf$
can be chosen as the total variation norm, see Section~\ref{sec:implementation}). That is, instead of solving the problem  \eqref{eq:wvd-const},
one aims at solving the problem
\begin{equation} \label{eq:dantzig}
	\left\{
	\begin{aligned}
	&\min_{f \in L^2(\R^d)}  &&
	\rf(f) \\
	&\text{such that} &&
	\max_{\la \in \La }
	\frac{\abs{\inner{u_\la, g -  \Ao f }}}
	{\threshp_j}
	\leq   1
	\,,
	\end{aligned}
	\right.
\end{equation}
This generalization constitutes a hybrid version that combines the WVD estimator with more general regularization functionals $\rf$. Such hybrid approaches have been introduced independently in
\cite{CanGuo02,DurFro01,Mal02a,StaDonCan01}
(see also \cite{ChaZho00,CoiSow00}). It is also related to the
Dantzig and multiscale estimators of \cite{CanTao07, FriMarMun12a,FriMarMun13,grasmair2015variational,haltmeier2017variational,Nem85}.

 \psfrag{B}{$f$}
\psfrag{Q}{$Q$}
\psfrag{a}{$(x,\xi)$}
\psfrag{transducer array}{linear/planar detector array}
\begin{figure}
\centering
\includegraphics[width=0.7\textwidth]{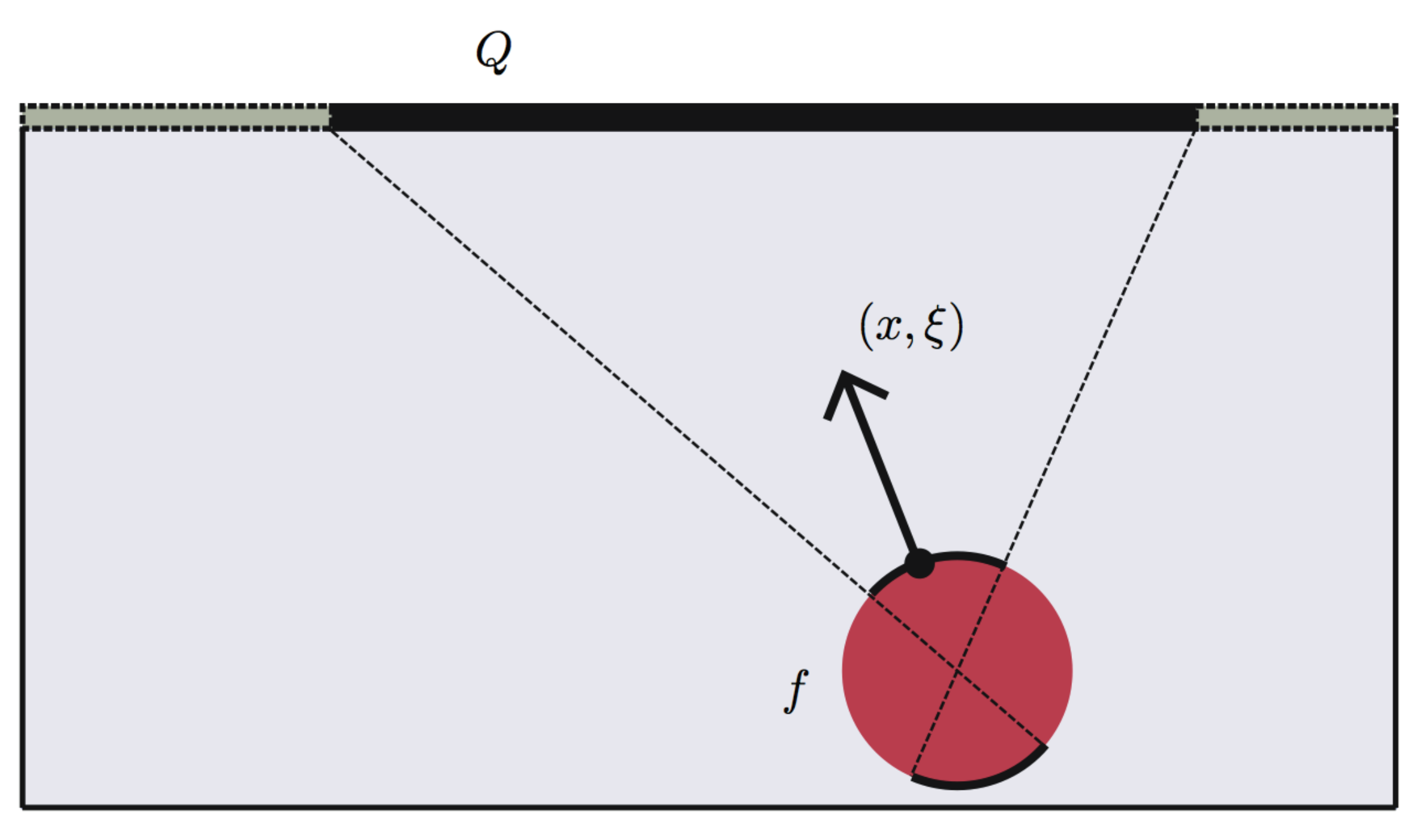}
\caption{\textsc{Limited view problem.} If data is are collected with
finite aperture $Q$, not all features of $f$ can be stably recovered.
Only singularities  $(x,\xi)$ can be reconstructed for  which $x + \R \xi$ intersects $Q$.}
\label{fg:setup}
\end{figure}

\section{Numerical implementation}
\label{sec:num}

In this section, we provide algorithms for the calculation of the vaguelette transform and the corresponding WVD estimator. Moreover, we consider a hybrid version of the WVD estimator that combines wavelets and TV regularization and discuss its implementation. We also present some numerical examples.
Throughout the following $(\wwave_\la)_{\la\in \La}$ denotes an orthonormal wavelet basis of $L^2(\R^d)$ and $(u_\la)_{\la\in \La}$ is the  corresponding orthogonal vaguelette basis of $\ran{\Ao} \subseteq L^2(\R^d)$ defined by $u_\la  \coloneqq \Ao \wwave_\la$ (see Section \ref{sec:wvd}).

\subsection{Practical aspects}

The considered noisy data model \eqref{eq:noisy} assumes that the data can be collected on the whole hyperplane $\partial \half_+ $.  However, this is not feasible in practice since the data can be collected only on a finite  subset.
In this subsection we  discuss the effects of partial (or limited view) data and discretization.

First, we address the limited data issue. In the considered imaging setup, the data is collected on a subset $Q \times [0,T]$ where
$Q \subseteq \R^{d-1}$  is the finite measurement aperture (see~Figure \ref{fg:setup})
and $T \in(0, \infty)$  the maximal measurement  time.  Such   partial or limited view data can  be  modeled by
\begin{equation}\label{eq:data-part}
    g(\xx, t)
    =
    \kl{\chi_{Q \times [0,T]} \, \Ao \signal} (\xx, t) \,,
\end{equation}
where $\chi_{Q \times [0,T]}$ denotes the characteristic function of $Q \times [0,T]$.
Using partial data $g$,  only certain features of $h$ can  be reconstructed in a stable way, see \cite{barannyk2015artifacts,frikel2015artifacts} and Figure \ref{fg:setup}. Consequently, the practical problem of reconstructing $h$  from partial data, is a limited data problem and therefore severely ill-posed.
It is therefore common to incorporate a-priori information into the reconstruction and so to regularize the reconstruction. In this work, we are doing it in two ways: First, we incorporate wavelet sparsity assumptions. This is what the WVD estimator does, which is implemented by
evaluating $\signal^\dag = \sum_{\la \in \La} \inner{g,  u_\la}_{L^2} \wwave_ \la $. The partial reconstruction $f^\dag$ is a ``good'' approximate reconstruction  for $f$,  since it recovers all visible boundaries of $f$ correctly and can be evaluated stably.   Second, we combine wavelet sparsity with TV regularization by using the using the hybrid estimator  \eqref{eq:dantzig} to impose even more regularization.

Another practical restriction is that only a finite number of samples
of the pressure can be measured.
Assuming equidistant sampling and  limited view data
$g$  as in \eqref{eq:data-part} with $Q = [0,T]^{d-1}$,
the actual sampled  data are  given by
\begin{equation} \label{eq:samp}
    \gn[n, m] \coloneqq
    g (  n \Delta_N, m \Delta_M)
    + \noise[n, m] \,,
    \quad (n,m) \in \set{1, \dots, N}^{d-1} \times \set{1, \dots, m}    \,.
\end{equation}
Here  $N$ and $M$ are natural numbers, $\Delta_N := X/N$  and $\Delta_M := T/M$ are  the  sampling step sizes and $\noise[n, m]$ decribes
the noise in the data. Using Shannon sampling theory it ca be shown that
\eqref{eq:samp} is correctly sampled for $\Delta_N = \Delta_M \leq \pi /\Omega$,
where $\Omega$  is the essential bandwidth of $f$;
 see~\cite{HalSchZan09b}.
 The precise analysis of discretization effects on the considered vaguelette estimators is  beyond the  scope of  this paper.

\subsection{Implementation of the vaguelette transform}

Analogously to the wavelet transform, we can define the vaguelette
transform of $ g \in L^2(\R^d)$ corresponding to the operator $\Ao$
by  $\VT g  \coloneqq \kl{\inner{u_\la, g}_{L^2}}_{\la \in \La}$.
From the representation $u_\la  = \Ao \wwave_\la$ and the definition of vaguelette transform we have
\begin{equation}
\forall g \in L^2(\R^d) \;
\forall \la \in \La \colon \quad
(\VT g)(\la)
	 =\inner{\Ao \wwave_\la,  g}_{L^2}
	 =\inner{\wwave_\la,  \Ao^\ast g}_{L^2} \,.
\end{equation}
Hence,  the   vaguelette transform can be computed  by first applying the
adjoint $\Ao^*$  to the data $g$ and then calculating  the wavelet
transform of $\Ao^* g$.

\begin{algorithm}[h]
\caption{Discrete vaguelette transform} \label{alg:v}
\begin{algorithmic}[1]
\medskip
\State
Compute $\Bn \gn \in \R^{N^{d-1} \times M}$
with $\Bn \gn  [n,m] \simeq \Ao^* g (n \Delta_N,\Delta_M)$

\State
Compute the discrete wavelet transform  $\Wn \Bn \gn$
\medskip
\end{algorithmic}
\end{algorithm}

Assuming discrete data $\gn \in \R^{N^{d-1} \times M}$
of the form $ \gn[n, m] = g (  n \Delta_N, m \Delta_M)$, the discrete vaguelette transform
can efficiently be computed by Algorithm~\ref{alg:v}.
Both the steps in this algorithm are well known
and numerically efficient. For the first step we use a numerical
approximation  of $\Ao^*$ by numerically  implementing~\eqref{eq:wave-ad} with a filtered backprojection (FBP) algorithm (see \cite{BurBauGruHalPal07,burgholzer2007exact}). For evaluating the second step we use the implementation of the fast discrete wavelet transform provided by the \textsc{Matlab}  function {\tt wavedec2}.

\subsection{Implementation of the reconstruction algorithms}
\label{sec:implementation}

For the numerical experiments, that will be presented in the next section,
we implement the   WVD soft-thresholding estimator defined by \eqref{eq:wvd-soft}
as well as a hybrid vaguelette-TV approach defined by \eqref{eq:dantzig} with $\rf(f) =  \norm{f}_{\rm TV}$ being the TV norm of $f$.  In both cases we choose $q$ as a half of the so called universal threshold  $\sigma \sqrt{2 \log (N^dM)}$  that can be derived  from
extreme value theory~\cite{Don95a,haltmeier2013extreme}.

The implementation of the WVD soft-thresholding estimator is summarized in Algorithm~\ref{alg:soft}. It is based on the discrete vaguelette transform that was presented in Algorithm~\ref{alg:v}.

\begin{algorithm}[h]
\caption{WVD soft-thresholding  estimator.} \label{alg:soft}
\begin{algorithmic}[1]
\medskip
\State
Compute vaguelette  coefficients $\cn = \Vn \gn$
by applying Algorithm~\ref{alg:v}

\State
Apply soft thresholding to $\cn$ using threshold $q$

\State
Apply the inverse wavelet transform
\medskip
\end{algorithmic}
\end{algorithm}

Given discrete data, the hybrid vaguelette-TV approach
 can be written in the form
\begin{equation} \label{eq:tv}
	\min_{\fn}
	\norm{\fn}_{\rm TV}
	\quad
	\text{ such that }
	\norm{ \Wn ( \Bn \gn -  \fn )}_\infty
	\leq   q
	\,,
\end{equation}
where $\Bn$ denotes the discrete FBP operator.
One recognizes that \eqref{eq:tv} can be implemented by
applying a hybrid  wavelet-TV denoising algorithm  to
$ \Bn \gn$.  Following~\cite{FriMarMun12b},
we rewrite the optimization problem~\eqref{eq:tv} in the form
\begin{equation} \label{eq:mre-aux}
	\min_{\fn, \vn}
	 \norm{\fn}_{\rm TV}
	 + i_{ \{ \norm{ \Wn \vn }_\infty \leq  \threshp\} }
	 \quad \text{ such that }
	 \fn  + \vn = \Bn \gn \,,
\end{equation}
and introduce the associated augmented
Lagrangian operator
\[ \mathcal{L}_c \kl{\fn, \vn, \mu}
\coloneqq
\norm{\fn}_{\rm TV}
+ i_{ \{ \norm{ \Wn \vn }_\infty \leq \threshp\} }
+ \inner{\mu, \fn  + \vn - \Bn\gn}
+ \frac{1}{2c} \norm{ \fn  + \vn - \Bn\gn}_2^2.\]
Then, \eqref{eq:mre-aux}  can be solved  by
the alternating direction  method of multipliers
(ADMM), introduced in \cite{GabMer76,GloMar75},
which alternatingly    performs minimization steps with respect to
$\fn$ and $\vn$  and maximization steps with respect to $\mu$.
The resulting ADMM algorithm for solving  \eqref{eq:tv} is
summarized in Algorithm~\ref{alg:admm-tv}.

\begin{algorithm}[h]
\caption{Hybrid vaguelette-TV approach}
\label{alg:admm-tv}

\begin{algorithmic}[1]
\medskip
\State
$\fn_0 = \vn_0 = \mu_0 = 0$

\For{$k=0,1,\dots, N_{\rm iter}$}
\State
$\fn_{k+1}
\in \argmin{\frac{1}{2}
	\norm{ \fn - \kl{ \Bn\gn - \vn_k - c \mu_k}}_2^2
	+ c \norm{\fn}_{\rm TV}}$

\State
$\vn_{k+1}
 =
 q \kl{\Bn\gn  - \fn_{k+1} - c \mu_k}
 / \max \set{q, \snorm{\Bn\gn  - \fn_{k+1} - c\mu_k}_2}
$

\State
$\mu_{k+1} \coloneqq \mu_k + c^{-1} \kl{\fn_{k+1} +  \vn_{k+1} - \Bn\gn }
$
\EndFor
\medskip
\end{algorithmic}
\end{algorithm}

For performing the $\fn$-update in Algorithm \ref{alg:admm-tv}
we have to solve the  unconstraint total variation minimization problem
$\frac{1}{2}
\norm{ \fn - \kl{ \Bn\gn - \vn_k - c \mu_k}}_2^2
+ c \norm{\fn}_{\rm TV}$. For this purpose we use
Chambolle's dual projection algorithm \cite{Cha04}.

\begin{figure}[tbh!]
\centering
\includegraphics[width=\textwidth, trim={ 1.7cm 1cm 1.7cm 0.3cm}]{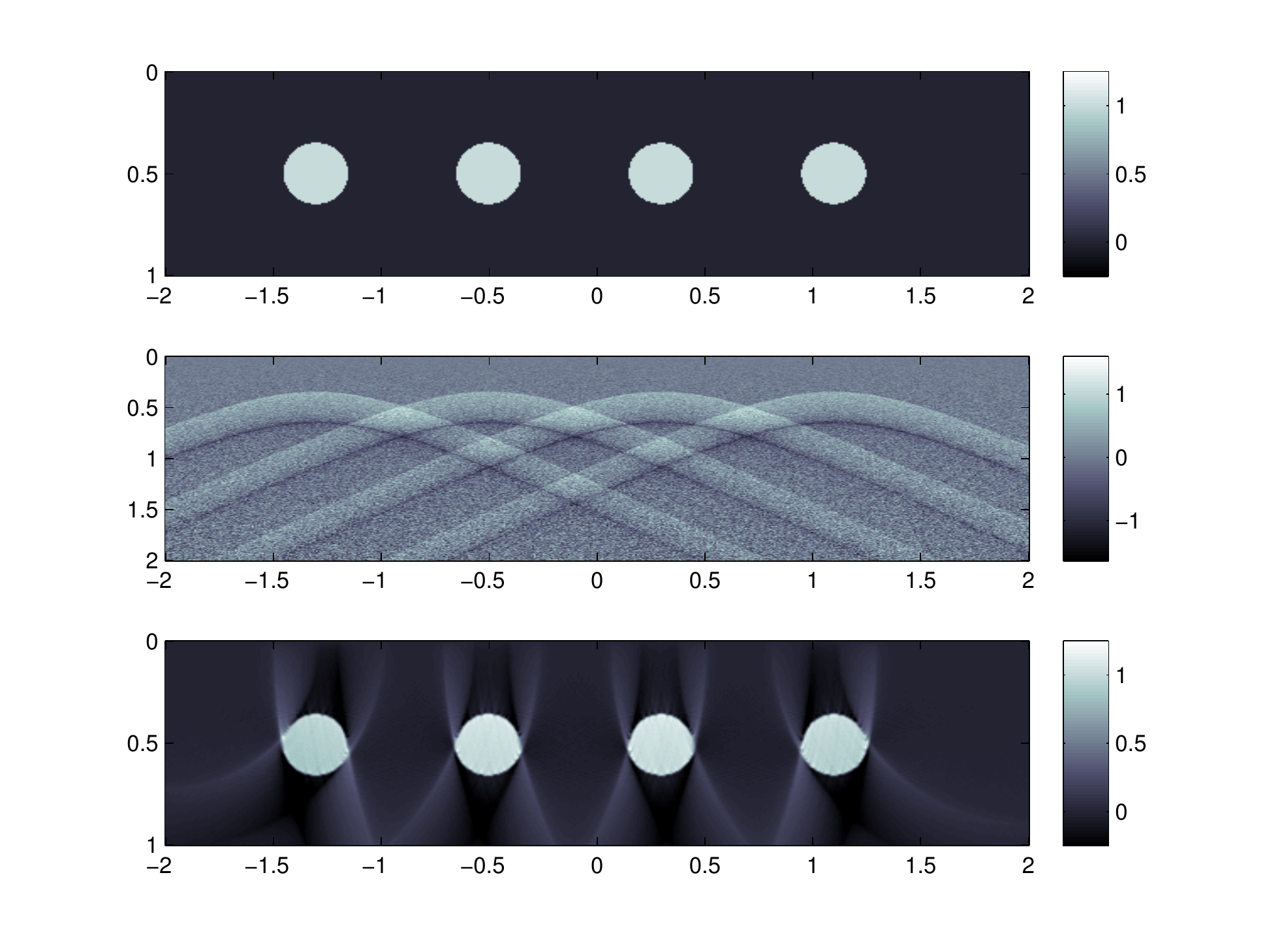}
\caption{\textsc{Phantom, data and reconstruction from (noise free) partial data.} One
notices the typical limited view artifacts in the
form of smearing out of almost vertical boundaries.\label{fig:fg}}
\end{figure}

\subsection{Numerical examples}

In this section we present a numerical example testing the
vaguelette soft-thresholding and the hybrid approach. For that purpose we consider
a simple phantom that consists  of the superposition of three
uniformly absorbing spheres as illustrated in the top image in
Figure~\ref{fig:fg}. The data $\gn = \An \fn $ are  computed numerically using an  implementation according to \eqref{eq:wave}. To simulate data errors, we added i.i.d Gaussian
 noise with standard deviation $\sigma = 0.25$. The relative $\ell^2$-error   in the  noisy data $\gn^\delta$ is  $\norm{\gn^\delta -\gn}_2 / \norm{\gn}_2 = 1.05$.

\begin{figure}[tbh!]
\centering
\includegraphics[width=\textwidth, trim={ 1.7cm 1cm 1.7cm 0.3cm}]{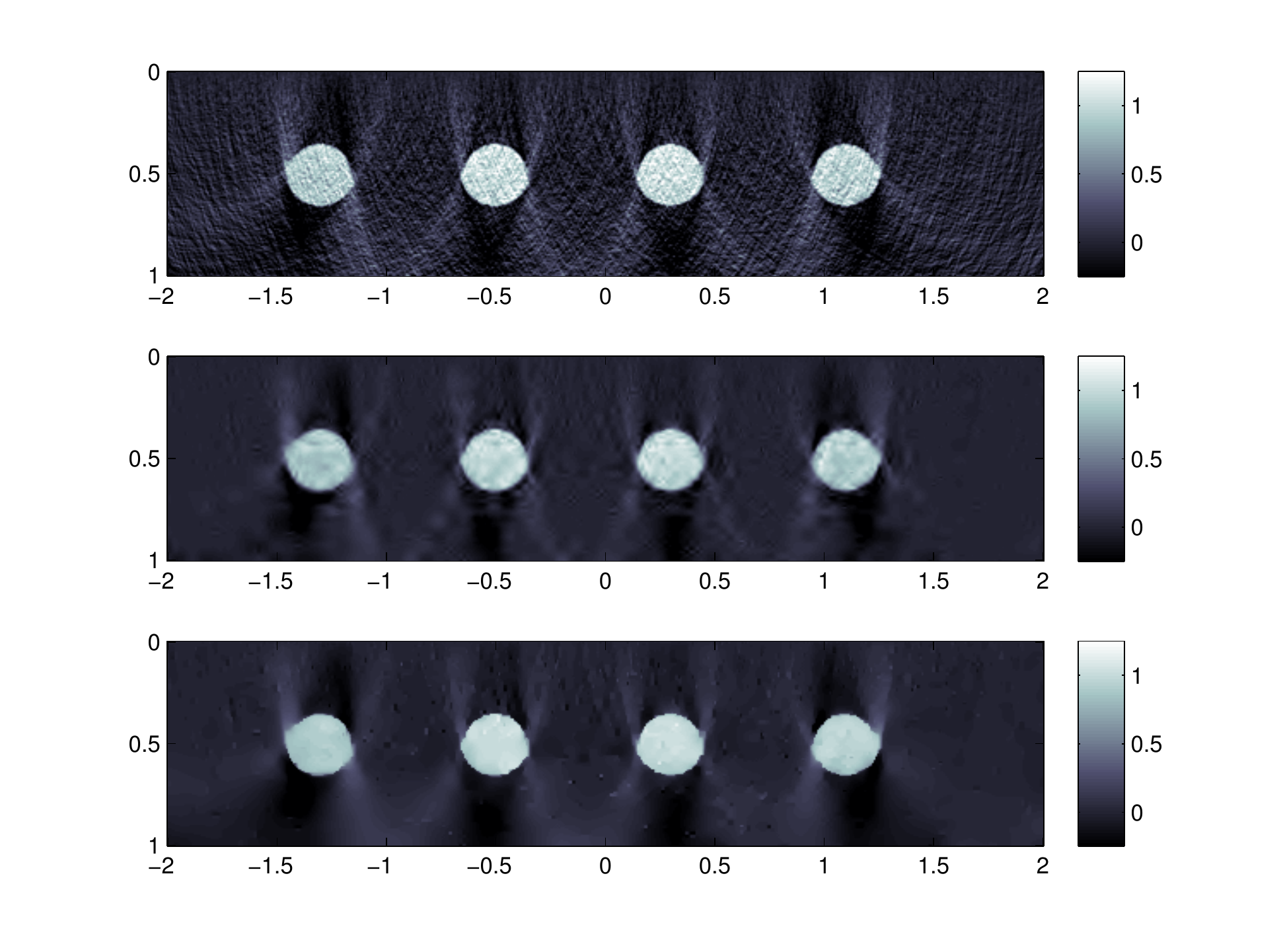}
\caption{\textsc{Reconstructions from noisy data.}\label{fig:rec} }
\end{figure}

\begin{figure}[tbh!]
\centering
\includegraphics[width=\textwidth, trim={ 1.7cm 1cm 1.7cm 0.3cm}]{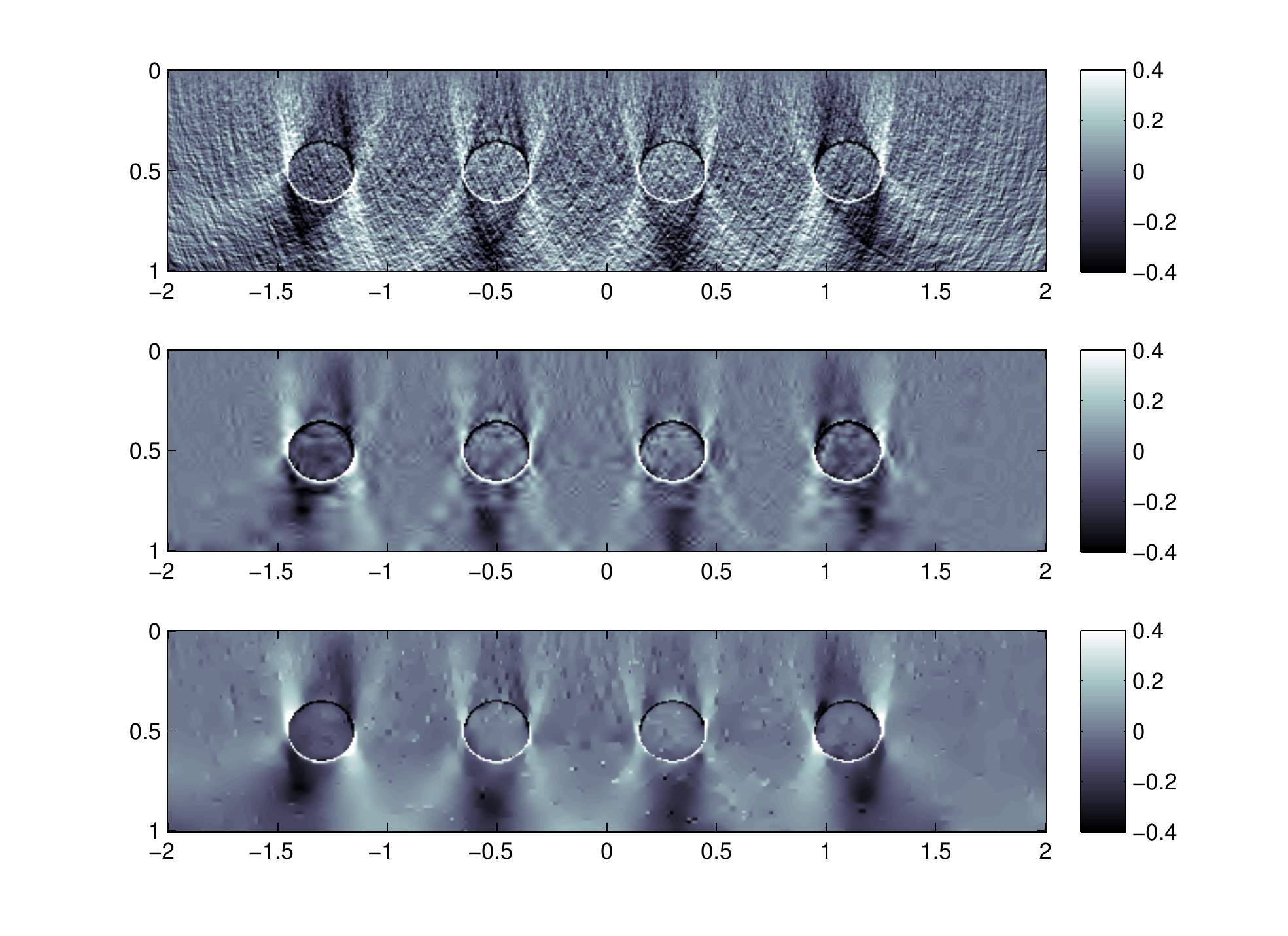}
\caption{\textsc{Differences between reconstructions from noisy data and the original phantom.}\label{fig:err} }
\end{figure}


Results of our reconstruction from noisy data are shown Figure~\ref{fig:rec}.
The top image shows the non-regularized reconstruction  $\fn_{\rm FBP}^\delta = \An \gn^\delta$, the middle image the vaguelette-thresholding reconstruction $\fn_{\rm WVD}^\delta$ and the bottom image the  reconstruction with the hybrid
vaguelette-TV method $\fn_{\rm hybrid}^\delta$.  Figure~\ref{fig:err} shows corresponding difference images between the reconstructions and the true phantom  $\fn$. One clearly observes that the vaguelette estimators significantly reduce the error in the reconstruction. This can be quantified by the relative
$\ell^2$-errors, which are equal to
$\snorm{\fn_{\rm FBP}^\delta -\fn}_2 / \snorm{\fn}_2 = 0.66$
for the unregularized reconstruction,
$\snorm{\fn_{\rm WVD}^\delta -\fn}_2 / \snorm{\fn}_2 = 0.38$
for the vaguelette thresholding estimator
$\snorm{\fn_{\rm hybrid}^\delta -\fn}_2 / \snorm{\fn}_2 = 0.42$
for the hybrid estimator. The relative
$\ell^2$-error for  the hybrid reconstruction is slightly larger than for the vaguelette-thresholding estimator. This is reasonable since the  vaguelette-thresholding estimator minimizes
the $\ell^2$-norm (see \eqref{eq:wvd-const}) among all potential reconstructions  $\fn$ that are compatible with  the data, whereas the hybrid estimator minimizes the total variation.
However, in some more appropriate error measure  the  hybrid reconstruction may outperform the thresholding estimator.
Further note that all reconstruction methods contain
some limited data artifacts, which cannot removed completely  by
wavelet methods \cite{barannyk2015artifacts,frikel2015artifacts,HalSchZan09b,nguyen2015strong,stefanov2013curved}.

\section{Conclusion}

In this paper we developed a regularization framework using wavelet sparsity in photoacoustic tomography.
For that purpose we derived wavelet-vaguelette (WVD) decompositions (see Theorem~\ref{thm:wvd}) and an easy but
efficient implementation of the corresponding  vaguelette transform (see~Algorithm \ref{alg:v}). Using the WVD we
derived an explicit formula for minimizing the sparse Tikhonov functional that can be implemented without any
iterative reconstruction procedure (see Algorithm~\ref{alg:soft}).  The considered regularization
approach has been shown to provide optimal  error estimates in the deterministic as well as in the stochastic setting (see Theorem~\ref{thm:random}).
In order to account for wavelet artifacts we also developed hybride  regularization methods combining wavelet sparsity with
total variation (see Algorithm~\ref{alg:admm-tv}). Numerical results  demonstrate the feasibility and
efficiency of our reconstruction approaches. Future work will be done to extend our approach to more general measurements
geometries in PAT and also to different Radon type  inverse problems.

\appendix

\section{Orthogonal wavelets}
\label{app:wavelets}
We recall some basic facts about orthogonal wavelets as we need
them for our purpose (in particular in Section \ref{sec:wvd}). For task of function
estimation, wavelets are known to sparsely represent many signals and, hence,
they can be used to effectively encode prior information. Another useful property of
wavelets consists in the ability to characterize several classical
smoothness measures (eg. Sobolev  and Besov norms)
in terms of the decay properties of wavelet coefficients. We will also use that
property in Section \ref{sec:noisy}. For a detailed introduction to wavelets we refer to \cite{Coh03,Dau92,Mal09}.

\subsection{One-dimensional wavelets}

We first briefly recall the basic definitions and notations of orthonormal wavelet bases (wavelet ONB) in one spatial dimension, which will be then extended to higher dimensions.

The construction of a wavelet ONB is based on the concept of a multiresolution analysis (MSA), which is  given by a sequence subspaces $(V_j)_{j\in\Z}$ in $ L^2\kl{\R}$ that satisfy the following requirements (see \cite{Coh03,Dau92,Mal09}):
\begin{itemize}[topsep=0em]

\item For all $j\in\Z$ it holds that $V_j\subseteq V_{j+1}$.
\item
The union $\bigcup_{j\in \N}\V_{j}$ is dense in
$L^2\kl{\R}$.
\item $\bigcap_{j\in \Z}\V_{j}=\sparen{0}$ .
\item For every $j \in \N$, we have
$f \in \V_j \iff f(\tfrac{\edot}{2}) \in \V_{j+1}$.
\item There is a function $\wscale\in L^2\kl{\R}$ such that the translates $(\wscale(\cdot-k))_{k\in\Z}$ constitute an ONB of the scaling space $V_0$.
\end{itemize}
The function $\wscale$ is called \emph{scaling function} and the spaces $V_j$ are called \emph{scaling (or approximation) spaces at scale j}. To each scaling space $V_j$, one can associate a \emph{wavelet (or detail) space $W_j$}, that are defined to be the orthogonal complements of $V_j$ in $V_{j+1}$, i.e., $V_{j+1}=V_j\oplus W_j$.
Because of the above properties it holds that, for each $j\in \Z$ and $t\in\R$, the functions
\begin{equation*}
t\mapsto\wscale_{j,k}\kl{t}
 \coloneqq 2^{j/2}
\wscale \kl{2^{j} t - k}
\end{equation*}
constitute an ONB for the scaling space $V_j$. One can also show that from the existence of the scaling function it follows that there exists a so-called \emph{generating wavelet (or mother wavelet)}  $\wwave \in L^2\kl{\R}$ such that,  for each $j\in \Z$ and $t\in\R$, the functions
\begin{equation*}
\wwave_{j,k}\kl{t}
 \coloneqq 2^{j/2}
\wscale \kl{2^{j} t - k}
\end{equation*}
constitute an ONB for the spaces $W_j$. Hence, we have that, for every $j\in\Z$, the following
mappings are bijections:
\begin{align*}
\V_{j}  \to \ell^2(\Z)
&\colon
\signal \mapsto
\kl{\inner{\wscale_{j,k},\signal} \mid  k \in \Z} \,,
\\
\W_{j}  \to \ell^2(\Z)
&\colon
\signal \mapsto
\kl{\inner{\wwave_{j,k},\signal} \mid  k \in \Z } \,.
\end{align*}

The above constructions provide the following decompositions of the signal space $L^2\kl{\R}$ into the  sum of the scaling
spaces  $\V_j$ and the wavelet spaces $\W_j$, or into a sum of only wavelet spaces:
\begin{equation*}
	L^2\kl{\R} = V_0\oplus \bigoplus_{j\geq 0} W_j = \bigoplus_{j\in\Z} W_j.
\end{equation*}
From these decomposition we immediately get the following decompositions of signals $f\in L^2\kl{\R}$:
\begin{align*}
	f &= \sum_{k\in\Z} \langle f, \wscale_{0,k}\rangle \wscale_{0,k} + \sum_{j\geq 0}\sum_{k\in \Z} \langle f, \wwave_{j,k}\rangle \wwave_{j,k}\\
	&=\sum_{j\in\Z}\sum_{k\in \Z} \langle f, \wwave_{j,k}\rangle \wwave_{j,k}.
\end{align*}
The coefficients of the above decomposition are called the \emph{wavelet and scaling coefficients of $f$}, respectively, and the corresponding mapping that maps $f$ to those coefficients is called the \emph{wavelet transform of $f$}. A detailed construction of orthogonal (and biorthogonal) wavelet systems together with  many interesting  details may be found, for example, in~\cite{Coh03,Dau92,Mal09}.

\subsection{Wavelets in higher dimension}

Wavelet bases in higher dimensions can be defined  by taking
tensor  products of the one-dimensional wavelet and scaling functions.
As a concrete example consider  $L^2(\R^2) \cong L^2(\R) \otimes L^2(\R)$ and let $\wwave, \wscale \colon \R \to \R$ be the generating wavelet and scaling functions, respectively. Then, an orthogonal wavelet basis of $L^2(\R^2)$
is defined by translates and scaled versions  of  the (tensor product) functions
\begin{align*}
\wwave^{(1)} \colon &(x,y) \in \R^2 \mapsto \wwave(x) \wscale(y) \\
\wwave^{(2)} \colon &(x,y) \in \R^2  \mapsto \wscale(x)      \wwave(y) \\
\wwave^{(3)} \colon &(x,y) \in \R^2\mapsto  \wwave(x)  \wwave(y) \,.
\end{align*}
The corresponding scaling functions at scale $j\in\Z$ of two variables are defined as translated and  scaled
versions of \[\wscale^{(0)}(x,y) \coloneqq  \wscale(x) \wscale(y).\]

Analogously to the above construction in two dimensions, one can define a wavelet ONB in $\R^d$ by a tensor product construction. In this case we get $2^d-1$ different generating wavelets $ \wwave^{(\eps)}$ and one generating
 scaling function $ \wscale^{(0)}$. Therefore, a wavelet ONB in $\R^d$ is system
$(\wwave_\la)_{\la \in \Lambda}$  in $L^2(\R^d)$, where the index $\la =(j,k,\eps)$ consists of three parameters, the  scale index $j \in \Z$, the location index $k \in \Z^d$,
and the  orientation index $\eps \in \set{1,2,\dots, 2^d-1}$. The
wavelet basis elements are again defined as dilates and translates of the generating wavelets $\wwave^{(\eps)}$:
\begin{equation}
	\wwave_{j, k,\eps} (x)
	=  2^{-jd/2} \, \wwave^{(\eps)}(2^j x - k)  \,.
\end{equation}
If we also add the index set $\La_{-1}  \coloneqq \set{0} \times \Z^d$
and consider the scaling functions defined by \[\wscale_{0, k}^\eps (x)
=  2^{-jd/2} \wscale^{(0)}(2^j x - k),\] the we get similar decompositions to the one-dimensional case:
\begin{align*}
	f &= \sum_{k\in\Z^d} \langle f, \wscale_{0,k}\rangle \wscale_{0,k} + \sum_{j\geq 0}\sum_{k\in \Z^d}\sum_{\epsilon=1}^{2^d-1} \langle f, \wwave_{j,k,\epsilon}\rangle \wwave_{j,k,\epsilon}\\
	&=\sum_{j\in\Z}\sum_{k\in \Z^d}\sum_{\epsilon=1}^{2^d-1} \langle f, \wwave_{j,k,\epsilon}\rangle \wwave_{j,k,\epsilon}.
\end{align*}

For $\signal \in L^2(\R^d)$ we define the wavelet
transform as
\begin{equation}\label{eq:wt}
(\Wave \signal ) \colon \La \to \R  \colon
\la \mapsto (\Wave\signal)(\la) \coloneqq \inner{\wwave_\la,\signal} \,.
\end{equation}
Then the wavelet transform  $\Wave \colon  L^2\kl{\R^d}  \to \ell^2\kl{\La}
\colon  f \mapsto \Wave f$ is an isometric isomorphism with the
reproducing property $\signal = \Wave^* \Wave \signal$.

\subsection{Besov spaces}
Orthonormal wavelet bases can be used to characterize smoothness of functions in terms of the decay properties of wavelet coefficients.
In particular, wavelets provide a convenient and a numerically efficient way to characterize Besov spaces and calculate Besov smoothness of functions, see \cite{Coh03}.
Skipping the details, we here only mention that the Besov spaces $\B_{p,q}^s(\R^d)$ can be defined by the condition,
that the Besov norm
\begin{equation*}
\norm{\signal}_{\B_{p,q}^r}
=
\sqrt[p]{\sum_{j\in\N}
2^{jsq} \,
\norm{ \Wave \signal\kl{j,\edot}}_{\ell^p}^q}
\qquad
\text{ with } \quad
s = r +  \frac{d}{2} - \frac{d}{p}  \,,
\end{equation*}
is well defined and finite. We note that, the given definition of $\norm{\edot}_{\B_{p,q}^r}$
is an equivalent norm to the classical definition of Besov norms. The above characterization
holds as long as the generating wavelet  has  $m > r$ vanishing moments and is $m$-times continuously differentiable.

In Section \ref{sec:noisy}, we use the above characterization of Besov norms in terms of wavelet coefficients in order to evaluate the performance of our method for functions that lie in balls of Besov spaces $\B_{p,q}^s(\R^d)$ for some given norm parameters  $p$, $q \geq 1$ and smoothness parameter $r \geq 0 $.

{\small

}


\begin{thebibliography}{100}

\bibitem{AbrSil98}
F.~Abramovich and B.~W. Silverman.
\newblock Wavelet decomposition approaches to statistical inverse problems.
\newblock {\em Biometrika}, 85(1):115--129, 1998.

\bibitem{AgrKuc07}
M.~Agranovsky and P.~Kuchment.
\newblock Uniqueness of reconstruction and an inversion procedure for
  thermoacoustic and photoacoustic tomography with variable sound speed.
\newblock {\em Inverse Probl.}, 23(5):2089--2102, 2007.

\bibitem{agranovsky2009reconstruction}
M.~Agranovsky, P.~Kuchment, and L.~Kunyansky.
\newblock On reconstruction formulas and algorithms for the thermoacoustic
  tomography.
\newblock In L.~V. Wang, editor, {\em Photoacoustic imaging and spectroscopy},
  chapter~8, pages 89--101. CRC Press, 2009.

\bibitem{And88}
L.-E. Andersson.
\newblock On the determination of a function from spherical averages.
\newblock {\em SIAM J. Math. Anal.}, 19(1):214--232, 1988.

\bibitem{arridge2016adjoint}
S.~R. Arridge, M.~M. Betcke, B.~T. Cox, F.~Lucka, and B.~E. Treeby.
\newblock On the adjoint operator in photoacoustic tomography.
\newblock {\em Inverse Probl.}, 32(11):115012 (19pp), 2016.

\bibitem{barannyk2015artifacts}
L.~L. Barannyk, J.~Frikel, and L.~V. Nguyen.
\newblock On artifacts in limited data spherical radon transform: Curved
  observation surface.
\newblock {\em Inverse Probl.}, 32(1), 2015.

\bibitem{beard2011biomedical}
P.~Beard.
\newblock Biomedical photoacoustic imaging.
\newblock {\em Interface focus}, 1(4):602--631, 2011.

\bibitem{teboulle2009fast}
A.~Beck and M.~Teboulle.
\newblock A fast iterative shrinkage-thresholding algorithm for linear inverse
  problems.
\newblock {\em SIAM J. Imaging Sci.}, 2(1):183--202, 2009.

\bibitem{Bel09}
A.~Beltukov.
\newblock Inversion of the spherical mean transform with sources on a
  hyperplane, 2009.
\newblock arXiv:0919.1380v1.

\bibitem{betcke2016acoustic}
M.~M. Betcke, B.~T. Cox, N.~Huynh, E.~Z. Zhang, P.~C. Beard, and S.~R. Arridge.
\newblock Acoustic wave field reconstruction from compressed measurements with
  application in photoacoustic tomography, 2016.
\newblock arXiv preprint arXiv:1609.02763.

\bibitem{bickel2009simultaneous}
P.~J. Bickel, Y.~Ritov, and A.~B. Tsybakov.
\newblock Simultaneous analysis of lasso and dantzig selector.
\newblock {\em Ann. Statist.}, pages 1705--1732, 2009.

\bibitem{bredies2008}
K.~Bredies and D.~Lorenz.
\newblock Iterated hard shrinkage for minimization problems with sparsity
  constraints.
\newblock {\em SIAM J. Sci. Comput.}, 30(2):657--683, 2008.

\bibitem{BK78}
A.~L. Buhgeim and V.~B. Kardakov.
\newblock Solution of an inverse problem for an elastic wave equation by the
  method of spherical means.
\newblock {\em Sibirsk. Mat. Z.}, 19(4):749--758, 1978.

\bibitem{burger2013convergence}
M.~Burger, J.~Flemming, and B.~Hofmann.
\newblock Convergence rates in {$\ell^1$}-regularization if the sparsity
  assumption fails.
\newblock {\em Inverse Probl.}, 29(2):025013, 16, 2013.

\bibitem{BurBauGruHalPal07}
P.~Burgholzer, J.~Bauer-Marschallinger, H.~Gr{\"u}n, M.~Haltmeier, and
  G.~Paltauf.
\newblock Temporal back-projection algorithms for photoacoustic tomography with
  integrating line detectors.
\newblock {\em Inverse Probl.}, 23(6):S65--S80, 2007.

\bibitem{burgholzer2007exact}
P.~Burgholzer, G.~J. Matt, M.~Haltmeier, and G.~Paltauf.
\newblock Exact and approximate imaging methods for photoacoustic tomography
  using an arbitrary detection surface.
\newblock {\em Phys. Rev. E}, 75(4):046706, 2007.

\bibitem{CanDon02}
E.~J. Cand{\`e}s and D.~Donoho.
\newblock Recovering edges in ill-posed inverse problems: Optimality of
  curvelet frames.
\newblock {\em Ann. Statist.}, 30(3):784--842, 2002.

\bibitem{CanGuo02}
E.~J. Cand{\`e}s and F.~Guo.
\newblock New multiscale transforms, minimum total variation synthesis:
  applications to edge preserving image reconstruction.
\newblock {\em Signal Process.}, 82:1519---1543, 2002.

\bibitem{CanTao07}
E.~J. Cand{\`e}s and T.~Tao.
\newblock The {D}antzig selector: statistical estimation when {$p$} is much
  larger than {$n$}.
\newblock {\em Ann. Statist.}, 35(6):2313--2351, 2007.

\bibitem{Cha04}
A.~Chambolle.
\newblock An algorithm for total variation minimization and applications.
\newblock {\em J. Math. Imaging Vision}, 20(1--2):89--97, 2004.

\bibitem{ChaZho00}
T.~F. Chan and H.~Zhou.
\newblock Total variation improved wavelet thresholding in image compression.
\newblock In {\em EEE International Conference on Image Processing}, volume~2,
  pages 391--394. IEEE, 2000.

\bibitem{CheDonSau01}
S.~S. Chen, D.~L. Donoho, and M.~A. Saunders.
\newblock Atomic decomposition by basis pursuit.
\newblock {\em SIAM Rev.}, 43(1):129--159, 2001.
\newblock Reprinted from SIAM J. Sci. Comput. {{\bf{2}}0} (1998), no. 1,
  33--61.

\bibitem{Coh03}
A.~Cohen.
\newblock {\em Numerical Analysis of Wavelet Methods}, volume~32 of {\em
  Studies in Mathematics and its Applications}.
\newblock North-Holland Publishing Co., Amsterdam, 2003.

\bibitem{CoiSow00}
R.~R. Coifman and A.~Sowa.
\newblock Combining the calculus of variations and wavelets for image
  enhancement.
\newblock {\em Appl. Comput. Harmon. Anal.}, 9(1):1--18, 2000.

\bibitem{colonna2010radon}
F.~Colonna, G.~Easley, K.~Guo, and D.~Labate.
\newblock Radon transform inversion using the shearlet representation.
\newblock {\em Appl. Comput. Harmon. Anal.}, 29(2):232--250, 2010.

\bibitem{CombPes11}
P.~L. Combettes and J.-C. Pesquet.
\newblock Proximal splitting methods in signal processing.
\newblock In {\em Fixed-point algorithms for inverse problems in science and
  engineering}, pages 185--212. Springer, 2011.

\bibitem{ComWaj05}
P.~L. Combettes and V.~R. Wajs.
\newblock Signal recovery by proximal forward-backward splitting.
\newblock {\em Multiscale Model. Sim.}, 4(4):1168--1200 (electronic), 2005.

\bibitem{CouHil62}
R.~Courant and D.~Hilbert.
\newblock {\em Methods of Mathematical Physics}, volume~2.
\newblock Wiley-Interscience, New York, 1962.

\bibitem{Dau92}
I.~Daubechies.
\newblock {\em Ten Lectures on Wavelets}.
\newblock SIAM, Philadelphia, PA, 1992.

\bibitem{daubechies2004iterative}
I.~Daubechies, M.~Defrise, and C.~De~Mol.
\newblock An iterative thresholding algorithm for linear inverse problems with
  a sparsity constraint.
\newblock {\em Comm. Pure Appl. Math.}, 57(11):1413--1457, 2004.

\bibitem{DeaBueNtzRaz12}
X.~L. Dean-Ben, A.~Buehler, V.~Ntziachristos, and D.~Razansky.
\newblock Accurate model-based reconstruction algorithm for three-dimensional
  optoacoustic tomography.
\newblock {\em IEEE Trans. Med. Imag.}, 31(10):1922--1928, 2012.

\bibitem{Don95a}
D.~L. Donoho.
\newblock De-noising by soft-thresholding.
\newblock {\em IEEE Trans. Inf. Theory}, 41(3):613--627, 1995.

\bibitem{Donoho:1995gp}
D.~L. Donoho.
\newblock Nonlinear solution of linear inverse problems by
  wavelet{\textendash}vaguelette decomposition.
\newblock {\em Appl. Comput. Harmon. Anal.}, 2(2):101--126, April 1995.

\bibitem{DurFro01}
S.~Durand and J.~Froment.
\newblock Artifact free signal denoising with wavelets.
\newblock {\em Proceedings of ICASSP 2001 ( 26th International Conference on
  Acoustics, Speech, and Signal Processing)}, 6:3685--3688, 2001.

\bibitem{Eva98}
L.~C. Evans.
\newblock {\em Partial Differential Equations}, volume~19 of {\em Graduate
  Studies in Mathematics}.
\newblock American Mathematical Society, Providence, RI, 1998.

\bibitem{Faw85}
J.~A. Fawcett.
\newblock Inversion of {$n$}-dimensional spherical averages.
\newblock {\em SIAM J. Appl. Math.}, 45(2):336--341, 1985.

\bibitem{figueiredo2003algorithm}
M.~A. Figueiredo and R.~D. Nowak.
\newblock An em algorithm for wavelet-based image restoration.
\newblock {\em IEEE Trans. Image Process.}, 12(8):906--916, 2003.

\bibitem{FinHalRak07}
D.~Finch, M.~Haltmeier, and Rakesh.
\newblock Inversion of spherical means and the wave equation in even
  dimensions.
\newblock {\em SIAM J. Appl. Math.}, 68(2):392--412, 2007.

\bibitem{FinPatRak04}
D.~Finch, S.~K. Patch, and Rakesh.
\newblock Determining a function from its mean values over a family of spheres.
\newblock {\em SIAM J. Math. Anal.}, 35(5):1213--1240, 2004.

\bibitem{flemming2010new}
J.~Flemming and B.~Hofmann.
\newblock A new approach to source conditions in regularization with general
  residual term.
\newblock {\em Num. Numer. Funct. Anal. Optim.}, 31(3):254--284, 2010.

\bibitem{FriMarMun12a}
K.~Frick, P.~Marnitz, and A.~Munk.
\newblock Shape-constrained regularization by statistical multiresolution for
  inverse problems: asymptotic analysis.
\newblock {\em Inverse Probl.}, 28(6):065006, 2012.

\bibitem{FriMarMun12b}
K.~Frick, P.~Marnitz, and A.~Munk.
\newblock Statistical multiresolution estimation in imaging: {F}undamental
  concepts and algorithmic approach.
\newblock {\em Electron. J. Statist.}, 6:231--268, 2012.

\bibitem{FriMarMun13}
K.~Frick, P.~Marnitz, and A.~Munk.
\newblock Statistical multiresolution estimation for variational imaging: With
  an application in poisson-biophotonics.
\newblock {\em J. Math. Imaging Vision}, 46(3):370--387, 2013.

\bibitem{frikel2013sparse}
J.~Frikel.
\newblock Sparse regularization in limited angle tomography.
\newblock {\em Appl. Comput. Harmon. Anal.}, 34(1):117--141, 2013.

\bibitem{frikel2015artifacts}
J.~Frikel and E.~T. Quinto.
\newblock Artifacts in incomplete data tomography with applications to
  photoacoustic tomography and sonar.
\newblock {\em SIAM J. Appl. Math.}, 75(2):703--725, 2015.

\bibitem{GabMer76}
D.~Gabay and B.~Mercier.
\newblock A dual algorithm for the solution of nonlinear variational problems
  via finite element approximation.
\newblock {\em Computers \& Mathematics with Applications}, 2(1):17--40, 1976.

\bibitem{GloMar75}
R.~Glowinski and A.~Marroco.
\newblock Sur l'approximation, par elements finis d'ordre un, et la resolution,
  par penalisation-dualite, d'une classe de problemes de dirichlet non
  lineares.
\newblock {\em Revue Franqaise d'Automatique, Informatique et Recherche
  Opirationelle}, 9:41--76, 1975.

\bibitem{grasmair2008sparse}
M.~Grasmair, M.~Haltmeier, and O.~Scherzer.
\newblock Sparse regularization with {$l^q$} penalty term.
\newblock {\em Inverse Probl.}, 24(5):055020, 13, 2008.

\bibitem{grasmair2011necessary}
M.~Grasmair, M.~Haltmeier, and O.~Scherzer.
\newblock Necessary and sufficient conditions for linear convergence of
  $\ell^1$-regularization.
\newblock {\em Comm. Pure Appl. Math.}, 64(2):161--182, 2011.

\bibitem{grasmair2015variational}
M.~Grasmair, H.~Li, and A.~Munk.
\newblock Variational multiscale nonparametric regression: Smooth functions.
\newblock {\em arXiv preprint arXiv:1512.01068}, 2015.

\bibitem{haltmeier13inversion}
M.~Haltmeier.
\newblock Inversion of circular means and the wave equation on convex planar
  domains.
\newblock {\em Comput. Math. Appl.}, 65(7):1025--1036, 2013.

\bibitem{haltmeier2013stable}
M.~Haltmeier.
\newblock Stable signal reconstruction via $\ell^1$-minimization in redundant,
  non-tight frames.
\newblock {\em IEEE Trans. Signal Process.}, 61(2):420--426, 2013.

\bibitem{haltmeier14universal}
M.~Haltmeier.
\newblock Universal inversion formulas for recovering a function from spherical
  means.
\newblock {\em SIAM J. Math. Anal.}, 46(1):214--232, 2014.

\bibitem{haltmeier2016compressed}
M.~Haltmeier, T.~Berer, S.~Moon, and P.~Burgholzer.
\newblock Compressed sensing and sparsity in photoacoustic tomography.
\newblock {\em J. Opt.}, 18(11):114004, 2016.

\bibitem{haltmeier2013extreme}
M.~Haltmeier and A.~Munk.
\newblock Extreme value analysis of empirical frame coefficients and
  implications for denoising by soft-thresholding.
\newblock {\em Appl. Comput. Harmon. Anal.}, page in press, 2013.

\bibitem{haltmeier2017variational}
M.~Haltmeier and A.~Munk.
\newblock A variational view on statistical multiscale estimation, 2017.
\newblock iIn preparation.

\bibitem{haltmeier2016iterative}
M.~Haltmeier and L.~V. Nguyen.
\newblock Iterative methods for photoacoustic tomography with variable sound
  speed.
\newblock arXiv:1611.07563, 2016.

\bibitem{HalPer15b}
M~Haltmeier and S~{Pereverzyev Jr.}
\newblock The universal back-projection formula for spherical means and the
  wave equation on certain quadric hypersurfaces.
\newblock {\em J. Math. Anal. Appl.}, 429(1):366--382, 2015.

\bibitem{HalSchZan09b}
M.~Haltmeier, O.~Scherzer, and G.~Zangerl.
\newblock A reconstruction algorithm for photoacoustic imaging based on the
  nonuniform {FFT}.
\newblock {\em IEEE Trans. Med. Imag.}, 28(11):1727--1735, November 2009.

\bibitem{Hristova2008}
Y.~Hristova, P.~Kuchment, and L.~Nguyen.
\newblock Reconstruction and time reversal in thermoacoustic tomography in
  acoustically homogeneous and inhomogeneous media.
\newblock {\em Inverse Probl.}, 24(5):055006 (25pp), 2008.

\bibitem{JaeSchuGerKitFre07}
M.~Jaeger, S.~Sch\"{u}pbach, A.~Gertsch, M.~Kitz, and M.~Frenz.
\newblock Fourier reconstruction in optoacoustic imaging using truncated
  regularized inverse k-space interpolation.
\newblock {\em Inverse Probl.}, 23:S51--S63, 2007.

\bibitem{johnstone2015gaussian}
I.~M. Johnstone.
\newblock {\em Gaussian estimation: Sequence and wavelet models}.
\newblock Draft of a Monograph, Version October 6 2015.

\bibitem{kolaczyk1996wavelet}
E.~D. Kolaczyk.
\newblock A wavelet shrinkage approach to tomographic image reconstruction.
\newblock {\em J. Amer. Statist. Assoc.}, 91(435):1079--1090, 1996.

\bibitem{KoeFraNiePalWebFre01}
K.~P. Kostli, D.~Frauchiger, J.~J. Niederhauser, G.~Paltauf, H.~P. Weber, and
  M.~Frenz.
\newblock Optoacoustic imaging using a three-dimensional reconstruction
  algorithm.
\newblock {\em IEEE Sel. Top. Quant. Electr.}, 7(6):918--923, 2001.

\bibitem{kruger1995photoacoustic}
R.A Kruger, P.~Lui, Y.R. Fang, and R.C. Appledorn.
\newblock Photoacoustic ultrasound {(PAUS)} -- reconstruction tomography.
\newblock {\em Med. Phys.}, 22(10):1605--1609, 1995.

\bibitem{kuchment2011mathematics}
P.~Kuchment and L.~Kunyansky.
\newblock Mathematics of photoacoustic and thermoacoustic tomography.
\newblock In {\em Handbook of Mathematical Methods in Imaging}, pages 817--865.
  Springer, 2011.

\bibitem{Kun07a}
L.~A. Kunyansky.
\newblock Explicit inversion formulae for the spherical mean {R}adon transform.
\newblock {\em Inverse Probl.}, 23(1):373--383, 2007.

\bibitem{Kun07b}
L.~A. Kunyansky.
\newblock A series solution and a fast algorithm for the inversion of the
  spherical mean {R}adon transform.
\newblock {\em Inverse Probl.}, 23(6):S11--S20, 2007.

\bibitem{lorenz2008convergence}
D.~Lorenz.
\newblock Convergence rates and source conditions for {T}ikhonov regularization
  with sparsity constraints.
\newblock {\em J. Inverse Ill-Posed Probl.}, 16(5):463--478, 2008.

\bibitem{Mal02a}
F.~Malgouyres.
\newblock Minimizing the total variation under a general convex constraint for
  image restoration.
\newblock {\em IEEE Trans. Image Process.}, 11(12):1450--1456, 2002.

\bibitem{Mal09}
S.~Mallat.
\newblock {\em A wavelet tour of signal processing: The sparse way}.
\newblock Elsevier/Academic Press, Amsterdam, third edition, 2009.

\bibitem{Narayanan:2010cy}
E.~K. Narayanan and Rakesh.
\newblock Spherical means with centers on a hyperplane in even dimensions.
\newblock {\em Inverse Probl.}, 26(3):035014, March 2010.

\bibitem{natterer2012photo}
F.~Natterer.
\newblock Photo-acoustic inversion in convex domains.
\newblock {\em Inverse Probl. Imaging}, 6(2):315--320, 2012.

\bibitem{Nem85}
A.~S. Nemirovskii.
\newblock Nonparametric estimation of smooth regression functions.
\newblock {\em Izv. Akad. Nauk. SSR Teckhn. Kibernet}, 3:50--60, 1985.

\bibitem{nguyen2015strong}
L.~V. Nguyen.
\newblock How strong are streak artifacts in limited angle computed tomography?
\newblock {\em Inverse Probl.}, 31(5):055003, 2015.

\bibitem{nguyen2016dissipative}
L.~V. Nguyen and L.~A. Kunyansky.
\newblock A dissipative time reversal technique for photoacoustic tomography in
  a cavity.
\newblock {\em SIAM J. Imaging Sci.}, 9(2):748--769, 2016.

\bibitem{NorLin81}
S.~J. Norton and M.~Linzer.
\newblock Ultrasonic reflectivity imaging in three dimensions: Exact inverse
  scattering solutions for plane, cylindrical and spherical apertures.
\newblock {\em {IEEE} Trans. Biomed. Eng.}, 28(2):202--220, 1981.

\bibitem{osborne2000lasso}
M.~R. Osborne, B.~Presnell, and B.~A. Turlach.
\newblock On the lasso and its dual.
\newblock {\em Journal of Computational and Graphical statistics},
  9(2):319--337, 2000.

\bibitem{palamodov2012uniform}
V.~P. Palamodov.
\newblock A uniform reconstruction formula in integral geometry.
\newblock {\em Inverse Probl.}, 28(6):065014, 2012.

\bibitem{PalNusHalBur07b}
G.~Paltauf, R.~Nuster, M.~Haltmeier, and P.~Burgholzer.
\newblock Experimental evaluation of reconstruction algorithms for limited view
  photoacoustic tomography with line detectors.
\newblock {\em Inverse Probl.}, 23(6):S81--S94, 2007.

\bibitem{paltauf2009photoacoustic}
G.~Paltauf, R.~Nuster, M.~Haltmeier, and P.~Burgholzer.
\newblock Photoacoustic tomography with integrating area and line detectors.
\newblock In {\em Photoacoustic imaging and spectroscopy}, chapter~20, pages
  251--263. CRC Press, 2009.

\bibitem{PalViaPraJac02}
G.~Paltauf, J.~A. Viator, S.~A. Prahl, and S.~L. Jacques.
\newblock Iterative reconstruction algorithm for optoacoustic imaging.
\newblock {\em J. Opt. Soc. Am.}, 112(4):1536--1544, 2002.

\bibitem{pan2009why}
X.~Pan, E.~Y. Sidky, and M.~Vannier.
\newblock Why do commercial ct scanners still employ traditional, filtered
  back-projection for image reconstruction?
\newblock {\em Inverse Probl.}, 25(12):123009, 2009.

\bibitem{poudel2017mitigation}
J.~Poudel, T.~P. Matthews, L.~Li, M.~A. Anastasio, and L.~V. Wang.
\newblock Mitigation of artifacts due to isolated acoustic heterogeneities in
  photoacoustic computed tomography using a variable data truncation-based
  reconstruction method.
\newblock {\em J. Biomed. Opt.}, 22(4):041018, 2017.

\bibitem{provost2009application}
J.~Provost and F.~Lesage.
\newblock The application of compressed sensing for photo-acoustic tomography.
\newblock {\em IEEE Trans. Med. Imag.}, 28(4):585--594, 2009.

\bibitem{ramlau2006tikhonov}
R.~Ramlau and G.~Teschke.
\newblock A {T}ikhonov-based projection iteration for nonlinear ill-posed
  problems with sparsity constraints.
\newblock {\em Numer. Math.}, 104(2):177--203, 2006.

\bibitem{RosNtzRaz13}
A.~Rosenthal, V.~Ntziachristos, and D.~Razansky.
\newblock Acoustic inversion in optoacoustic tomography: A review.
\newblock {\em Curr. Med. Imaging Rev.}, 9(4):318, 2013.

\bibitem{scherzer2009variational}
O.~Scherzer, M.~Grasmair, H.~Grossauer, M.~Haltmeier, and F.~Lenzen.
\newblock {\em Variational methods in imaging}, volume 167 of {\em Applied
  Mathematical Sciences}.
\newblock Springer, New York, 2009.

\bibitem{StaDonCan01}
J.-L. Starck, D.~Donoho, and E.~Cand{\`e}s.
\newblock Very high quality image restoration by combining wavelets and
  curvelets.
\newblock {\em Proc. SPIE}, 4478:9, 2011.

\bibitem{stefanov2013curved}
P.~Stefanov and G.~Uhlmann.
\newblock Is a curved flight path in sar better than a straight one?
\newblock {\em SIAM J.}, 73(4):1596--1612, 2013.

\bibitem{Tib96}
R.~Tibshirani.
\newblock Regression shrinkage and selection via the lasso.
\newblock {\em J. Roy. Statist. Soc. Ser. B}, 58(1):267--288, 1996.

\bibitem{Treeby10}
B.~E. Treeby and B.~T. Cox.
\newblock k-wave: Matlab toolbox for the simulation and reconstruction of
  photoacoustic wave-fields.
\newblock {\em J. Biomed. Opt.}, 15:021314, 2010.

\bibitem{tsybakov2009introduction}
A.~B. Tsybakov.
\newblock {\em Introduction to nonparametric estimation.}
\newblock Springer Series in Statistics. Springer, New York, 2009.

\bibitem{vaiter2013robust}
S.~Vaiter, G.~Peyr{\'e}, C.~Dossal, and J.~Fadili.
\newblock Robust sparse analysis regularization.
\newblock {\em IEEE Trans. Inf. Theory}, 59(4):2001--2016, 2013.

\bibitem{wang2011photoacoustic}
K.~Wang and M.~A. Anastasio.
\newblock Photoacoustic and thermoacoustic tomography: image formation
  principles.
\newblock In {\em Handbook of Mathematical Methods in Imaging}, pages 781--815.
  Springer, 2011.

\bibitem{wang2014discrete}
K.~Wang, R.~W. Schoonover, R.~Su, A.~Oraevsky, and M.~A. Anastasio.
\newblock Discrete imaging models for three-dimensional optoacoustic tomography
  using radially symmetric expansion functions.
\newblock {\em IEEE Trans. Med. Imag.}, 33(5):1180--1193, 2014.

\bibitem{wang2012investigation}
K.~Wang, R.~Su, A.~A. Oraevsky, and M.~A. Anastasio.
\newblock Investigation of iterative image reconstruction in three-dimensional
  optoacoustic tomography.
\newblock {\em Phys. Med. Biol.}, 57(17):5399, 2012.

\bibitem{wang2009multiscale}
L.~V. Wang.
\newblock Multiscale photoacoustic microscopy and computed tomography.
\newblock {\em Nature Phot.}, 3(9):503--509, 2009.

\bibitem{xu2005universal}
M.~Xu and L.~V. Wang.
\newblock Universal back-projection algorithm for photoacoustic computed
  tomography.
\newblock {\em Phys. Rev. E}, 71(1):016706, 2005.

\bibitem{xu2006photoacoustic}
M.~Xu and L.~V. Wang.
\newblock Photoacoustic imaging in biomedicine.
\newblock {\em Rev. Sci. Instr.}, 77(4):041101 (22pp), 2006.

\bibitem{xu2002exact1}
Y.~Xu, D.~Feng, and L.~V. Wang.
\newblock Exact frequency--domain reconstrcution for thermoacosutic tomography
  ---{ I}: Planar geometry.
\newblock {\em IEEE Trans. Med. Imag.}, 21(7):823--828, 2002.

\bibitem{zhang2009effects}
J.~Zhang, M.~A. Anastasio, P.~J. La~Rivi{\`e}re, and L.~V. Wang.
\newblock Effects of different imaging models on least-squares image
  reconstruction accuracy in photoacoustic tomography.
\newblock {\em IEEE Trans. Med. Imag.}, 28(11):1781--1790, 2009.

\bibitem{Zou06}
H.~Zou.
\newblock The adaptive lasso and its oracle properties.
\newblock {\em J. Amer. Statist. Assoc.}, 101(476):1418--1429, 2006.

\end{thebibliography}
\end{document}